\newtheorem{lemma}{Lemma}[section]
\newtheorem{theorem}{Theorem}[section]
\newtheorem{remark}{Remark}[section]
\def\rr{\mathbb{R}}
\def\rr{\mathbb{R}}
\def\eps{\varepsilon}
\def\R{\right}
\def\rrd{\rr^d}
\def\R{\mathbb R}
\begin{document}

\title[First eigenvalue for nonlocal problems]
{Lower and upper bounds for the first eigenvalue of nonlocal
diffusion problems in the whole space}

\author[L. I. Ignat, J. D. Rossi and A. San Antolin]{Liviu I. Ignat, Julio D. Rossi and Angel San Antolin}

\address{L. I. Ignat
\hfill\break\indent Institute of Mathematics ``Simion Stoilow'' of
the Romanian Academy, \hfill\break\indent 21 Calea Grivitei Street
 \hfill\break\indent 010702, Bucharest,  ROMANIA \hfill\break\indent
and \hfill\break\indent
 BCAM - Basque Center for Applied Mathematics, \hfill\break\indent
Bizkaia Technology Park, Building 500 Derio, \hfill\break\indent
Basque Country, SPAIN.}
 \email{{\tt
liviu.ignat@gmail.com}\hfill\break\indent  {\it Web page: }{\tt
http://www.imar.ro/\~\,lignat}}

\address{J. D. Rossi
\hfill\break\indent Departamento de An\'{a}lisis Matem\'{a}tico,
Universidad de Alicante, \hfill\break\indent Ap. correos 99, 03080,
\hfill\break\indent Alicante, SPAIN. \hfill\break\indent On leave
from \hfill\break\indent Dpto. de Matem{\'a}ticas, FCEyN,
Universidad de Buenos Aires, \hfill\break\indent 1428, Buenos Aires,
ARGENTINA. } \email{{\tt jrossi@dm.uba.ar} \hfill\break\indent {\it
Web page: }{\tt http://mate.dm.uba.ar/$\sim$jrossi/}}

\address{A. San Antolin
\hfill\break\indent Departamento de An\'{a}lisis Matem\'{a}tico,
 Universidad de Alicante,
\hfill\break\indent Ap. correos 99, \hfill\break\indent 03080,
Alicante, SPAIN. } \email{{\tt angel.sanantolin@ua.es}}

\keywords{Nonlocal diffusion, eigenvalues.\\
\indent 2000 {\it Mathematics Subject Classification.} 35B40,
45A07, 45G10.}

\begin{abstract} We find lower and upper bounds for the first eigenvalue of a nonlocal
diffusion operator of the form $ T(u) = - \int_{\rr^d} K(x,y)
(u(y)-u(x))
\, dy$. Here we consider a kernel $K(x,y)=\psi (y-a( x))+\psi(x-a( y))$
where $\psi$ is a bounded, nonnegative function supported in the unit
ball  and $a$ means a diffeomorphism on $\rr^d$. A simple
example being a linear function $a(x)= Ax$. The upper and lower
bounds that we obtain are given in terms of the Jacobian of $a$
and the integral of $\psi$. Indeed, in the linear case $a(x) = Ax$
we obtain an explicit expression for the first eigenvalue  in the whole $\rr^d$ and it is positive when the
the determinant  of the matrix $A$ is different from
one. As an application of our results, we observe that, when the
first eigenvalue is positive, there is an exponential decay for
the solutions to the associated evolution problem. As a tool to
obtain the result, we also study the behaviour of the principal
eigenvalue of the nonlocal Dirichlet  problem in the ball $B_R$ and prove that
it converges to the first eigenvalue in the whole space as $R\to
\infty$.
\end{abstract}

\maketitle

\section{Introduction}
\label{Sect.intro}
\setcounter{equation}{0}

Nonlocal problems have been recently widely used to model
diffusion processes. When $u(x,t)$ is interpreted as the density
of a single population at the point $x$ at time $t$ and $J(x-y)$
is the probability of ``jumping" from location $y$ to location
$x$, the convolution $(J*u)(x) = \int_{\R^d} J(y-x) u(y,t)\, dy$
is the rate at which individuals arrive to position $x$ from all
other positions, while $-\int_{\R^d} J(y-x) u(x,t)\, dy$ is the
rate at which they leave position $x$ to reach any other position.
If in addition no external source is present, we obtain that $u$
is a solution to the following evolution problem
\begin{equation}\label{probgen}
u_t(x,t) = \displaystyle \int_{\R^d} J(y-x) (u(y,t)-u(x,t))\, dy.
\end{equation}
This equation is understood to hold in a bounded domain, this is,
for $x\in
\Omega$ and has to be complemented with a ``boundary" condition.
For example,
$u=0$ in $\R^d\setminus
\Omega$ which means that the habitat $\Omega$ is surrounded by a hostile
environment (see
 \cite{F} and \cite{Du} for a general nonlocal vector calculus).
Problem (\ref{probgen}) and its stationary version have been
considered recently in connection with real applications (for
example to peridynamics, a recent model for elasticity), we quote
for instance
\cite{AMRT},
\cite{CERW1},
\cite{CERW2}, \cite{BCC},
\cite{BFRW},
 \cite{Co1},
\cite{CD2},
\cite{CR1},
\cite{CCR}, \cite{PLPS}, \cite{Sill}, \cite{SL}
and the recent book \cite{libro}. See also \cite{IR2} for the appearance of convective
terms, \cite{AMRT2} for a problem with nonlinear nonlocal
diffusion and \cite{CCEM}, \cite{CER} for other features in
related nonlocal problems.

On the other hand, it is well known that eigenvalue problems are a
fundamental tool to deal with local problems. In particular, the
so-called principal eigenvalue of the Laplacian with Dirichlet
boundary conditions,
\begin{equation}\label{eig-local}
\left\{
\begin{array}{ll}
-\Delta v(x)= \sigma v(x), \qquad & x\in \Omega,\\
\quad v(x)=0, \qquad & x\in \partial\Omega,
\end{array}
\right.
\end{equation}
plays an important role, since it gives the exponential decay of
solutions to the associated parabolic problem, $u_t = \Delta u$
with $u|_{\partial \Omega}=0$. The properties of the principal
eigenvalue of (\ref{eig-local}) are well-known, see \cite{GT}.

For the nonlocal problem, in \cite{GMR-autov} the authors consider
the ``Dirichlet" eigenvalue problem for a nonlocal operator in a
smooth bounded domain $\Omega$, that is,
\begin{equation}\label{eigen}
\left\{
\begin{array}{ll}
\displaystyle (J*u)(x) -u(x) = -\lambda u(x), \quad & x\in \Omega,\\[0.25pc]
\qquad u (x) = 0, \qquad & x\in \R^d\setminus \Omega.
\end{array}
\right.
\end{equation}
They show that the first eigenvalue has associated a positive
eigenfunction and that the eigenvalue goes to zero as the domain is
expanded, i.e., $\lambda_1 (k\Omega) \to 0$ as $k \to \infty$. In
addition, it is proved in \cite{CCR} that solutions to
\eqref{probgen} in the whole $\rr^d$ decay in the $L^2$-norm as
$t^{-d/4}$. Therefore the first eigenvalue in the whole space $\R^d$
is zero for the convolution case. When we face a convolution one of
the main tools is the use of the Fourier transform, see \cite{CCR}.

For more general kernels, in \cite{IR3} energy methods where
applied to obtain decay estimates for solutions to nonlocal
evolution equations whose kernel is not given by a convolution,
that is, equations of the form
\begin{equation}\label{lineal}
u_t (x,t) = \int_{\rr^d} K(x,y)(u(y,t)
 - u(x,t)) \, dy
\end{equation}
with $K(x,y)$ a symmetric nonnegative kernel. The obtained decay
estimates are of polinomial type, more precisely, $\| u (\cdot,
t)\|_{L^2 (\rr^d)} \leq C t^{-d/4}$. We remark that this decay bound
need not be optimal, in fact, in \cite{IR3} there is a particular
example of a kernel $K$ that give exponential decay in $L^2(\rr)$.
The exponential decay of solutions suggests that the associated
first eigenvalue is positive.

Our main goal in the present work is to study properties of the
principal eigenvalue of nonlocal diffusion operators when the
associated kernel is not of convolution type. Some preliminary
properties are already known, as existence, uniqueness and a
variational characterization. To go further, we need to assume
some structure for the kernel. Let us consider a function $\psi$
nonnegative, bounded and supported in the unit ball in $\rr^d$. We associate with this function a kernel of the form
\begin{equation}\label{forma.nucleo}
K(x,y)=\psi (y-a( x))+\psi(x-a( y))
\end{equation}
where $a(x)$ is a diffeomorphism on $\rr^d$. Note that $K$ is
symmetric and that the convolution type kernels also take the form
\eqref{forma.nucleo} (just put $a(x)=x$). For this kernels let us
look for the first eigenvalue of the associated nonlocal operator,
that is,
\begin{equation}\label{eigenvalue}
- \int_{\rr^d} K(x,y) (u(y)-u(x)) \, dy = \lambda_1 u (x).
\end{equation}

Some known results (that we state in the next section for
completeness) read as follows: For any bounded domain $\Omega$
there exists a principal eigenvalue  $\lambda_1 (\Omega)$ of
problem
\eqref{eigenvalue} with $u\equiv 0$ in $\rr^d \setminus \Omega$. The
 corresponding non-negative eigenfunction $\phi_1(x)$ is strictly positive in~$\Omega$.
 Moreover, the first eigenvalue is given by
\begin{equation}\label{lambdar}
\lambda_1(\Omega)=
\inf _{u\in L^2(\Omega)}
\frac {\displaystyle\int _{\rr^d}\int_{\rrd} K(x,y)(\tilde u(x)-\tilde u(y))^2dxdy}
{\displaystyle\int_{\Omega}u^2(x)dx}.
\end{equation}
Here we have denoted by $\tilde{u}$ the extension by zero of $u$,
$$\tilde u(x)=\left\{
\begin{array}{ll}
u(x),& x\in \Omega,\\
0,& x\in \rr^d \setminus \Omega.
\end{array}
\right.$$
We will use this notation trough the whole paper. When we deal
with the whole space we have
\begin{equation}\label{lambda1}
\lambda_1 (\rr^d)=\inf _{u\in L^2(\rr^d)}\frac{\displaystyle
\int_{\rr^d}\int_{\rrd} K(x,y)(u(x)-u(y))^2dxdy}{\displaystyle\int _{\rrd}u^2(x)dx}.
\end{equation}

The main results of this paper are the following:

\begin{theorem} \label{teo.conver} Let $\Omega $ be a bounded domain with $0\in \Omega$ and
consider its dilations by a real factor $R$, $R\Omega= \{ Rx
\, : \, x\in \Omega\}$. Then
\begin{equation}\label{lambdalimit.intro}
\lambda_1(\rr^d) =\lim _{R\rightarrow \infty} \lambda_1(R \Omega).
\end{equation}
\end{theorem}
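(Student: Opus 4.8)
The plan is to work directly with the two Rayleigh quotient characterizations \eqref{lambdar} and \eqref{lambda1} and to establish the two bounds
\[
\lambda_1(\rrd)\le\liminf_{R\to\infty}\lambda_1(R\Omega)
\qquad\text{and}\qquad
\limsup_{R\to\infty}\lambda_1(R\Omega)\le\lambda_1(\rrd),
\]
which together give both the existence of the limit and the identity \eqref{lambdalimit.intro}. Throughout I write
\[
E(u)=\int_{\rrd}\int_{\rrd}K(x,y)\,(u(x)-u(y))^2\,dx\,dy
\]
for the nonlocal energy, so that the quotients in \eqref{lambdar} and \eqref{lambda1} are $E(\tilde u)/\|u\|_{L^2(\Omega)}^2$ and $E(u)/\|u\|_{L^2(\rrd)}^2$ respectively.

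For the first bound I would simply observe that extension by zero produces admissible competitors. Given any $R$ and any $u\in L^2(R\Omega)$, its extension $\tilde u$ lies in $L^2(\rrd)$, and the quotient $E(\tilde u)/\|\tilde u\|_{L^2(\rrd)}^2$ appearing in \eqref{lambda1} coincides with the quotient $E(\tilde u)/\|u\|_{L^2(R\Omega)}^2$ from \eqref{lambdar}, since $\|\tilde u\|_{L^2(\rrd)}^2=\|u\|_{L^2(R\Omega)}^2$. Taking the infimum over all such $u$ in \eqref{lambda1} therefore gives $\lambda_1(\rrd)\le\lambda_1(R\Omega)$ for every $R$, hence $\lambda_1(\rrd)\le\liminf_{R\to\infty}\lambda_1(R\Omega)$. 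I note in passing that every quotient is non-negative because $K\ge0$, so all the eigenvalues involved are non-negative.

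The second bound is the substantial part, and the key point is the continuity of $E$ on $L^2(\rrd)$ under truncation. Because $\psi$ is bounded and supported in the unit ball and $a$ is a diffeomorphism with controlled Jacobian, one has $\sup_x\int_{\rrd}K(x,y)\,dy=:C_0<\infty$; by the symmetry of $K$ this yields $E(u)\le 4C_0\|u\|_{L^2(\rrd)}^2$, and writing $E(u)-E(v)$ in terms of $E(u-v)$ and the associated bilinear form shows that $E$ is Lipschitz on bounded subsets of $L^2(\rrd)$. Next I fix $\eps>0$ and pick $u\in L^2(\rrd)$, $u\neq0$, with $E(u)/\|u\|_{L^2(\rrd)}^2<\lambda_1(\rrd)+\eps/2$. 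The truncations $u_\rho:=u\,\mathbf 1_{B_\rho}$ converge to $u$ in $L^2(\rrd)$ as $\rho\to\infty$, so by continuity of $E$ and of the norm the quotient of $u_\rho$ falls below $\lambda_1(\rrd)+\eps$ for some fixed radius $\rho_0$. Since $0\in\Omega$ and $\Omega$ is open there is $\delta>0$ with $B_\delta\subset\Omega$, whence $B_{R\delta}\subset R\Omega$; thus for every $R>\rho_0/\delta$ the function $u_{\rho_0}$ is supported in $R\Omega$ and is admissible in \eqref{lambdar} with the same quotient. This gives $\lambda_1(R\Omega)\le\lambda_1(\rrd)+\eps$ for all large $R$, so $\limsup_{R\to\infty}\lambda_1(R\Omega)\le\lambda_1(\rrd)+\eps$, and letting $\eps\to0$ completes the argument.

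I expect the genuine obstacle to be precisely the continuity of the nonlocal energy $E$ on $L^2(\rrd)$, that is, the justification that a near-minimizer of \eqref{lambda1} may be replaced by a compactly supported function without spoiling its Rayleigh quotient. Everything hinges on the uniform mass bound $\sup_x\int_{\rrd}K(x,y)\,dy<\infty$, which is where the structural hypotheses on $\psi$ and on the diffeomorphism $a$ enter; the remaining ingredients (admissibility by extension and fitting a fixed ball inside $R\Omega$ for large $R$) are soft and require no further estimates.
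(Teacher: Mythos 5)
Your proof is correct and follows essentially the same route as the paper: extension by zero gives $\lambda_1(\rr^d)\le\lambda_1(R\Omega)$, and truncating a near-minimizer of \eqref{lambda1} to a fixed ball (which sits inside $R\Omega$ for large $R$ since $0\in\Omega$) gives the reverse inequality in the limit, the key quantitative input in both arguments being $\sup_x\int_{\rr^d}K(x,y)\,dy<\infty$. The only cosmetic differences are that you justify the stability of the energy under truncation via local Lipschitz continuity of $E$ on $L^2$ rather than dominated convergence, and you handle a general $\Omega$ directly via an inscribed ball instead of first proving the ball case and then sandwiching $B_{r_1}\subset\Omega\subset B_{r_2}$.
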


Now, we state our result concerning lower bounds for the first
eigenvalue.

\begin{theorem} \label{teo.cota.por.abajo}
Assume that the kernel is given by \eqref{forma.nucleo} and that
the Jacobian of $a^{-1}$, $J_{a^{-1}}$, verifies
 \[
 \displaystyle\sup_{x\in \rr^d}  |J_{a^{-1}}(x)|=M< 1
 \qquad \text{ or } \qquad
  \displaystyle\inf _{x\in \rr^d}|J_{a^{-1}}(x)|=m> 1.
\]
 Then
 $$
\lambda_1 (\rr^d) \geq 2(1-M^{1/2})^2 \left(\int
_{\rr^d}\psi(x)dx\right),
$$
in the first case and
$$
\lambda_1(\rr^d) \geq 2(m^{1/2}-1)^2 \left(\int_{\rr^d}\psi(x)dx\right),
$$
in the second case.
\end{theorem}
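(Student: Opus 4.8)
The plan is to work directly from the variational characterization \eqref{lambda1} and to produce a lower bound for the Rayleigh quotient that is uniform in $u$. The first step is to exploit the symmetry of the two summands in \eqref{forma.nucleo}: interchanging the roles of $x$ and $y$ in the term carrying $\psi(x-a(y))$ turns it into the term carrying $\psi(y-a(x))$, so the numerator collapses to
\begin{equation*}
\int_{\rrd}\int_{\rrd} K(x,y)(u(x)-u(y))^2\,dx\,dy = 2\int_{\rrd}\int_{\rrd}\psi(y-a(x))(u(x)-u(y))^2\,dx\,dy .
\end{equation*}
This is the key simplification, since afterwards only a single change of variables involving the Jacobian will appear.

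Next I would split the square by the elementary Young inequality, with a free parameter $\eps\in(0,1)$,
\begin{equation*}
(u(x)-u(y))^2 \ge (1-\eps)\,u(x)^2 - \Big(\tfrac1\eps-1\Big)u(y)^2 ,
\end{equation*}
and integrate it against $2\psi(y-a(x))$. The two resulting integrals are evaluated by Fubini together with the substitution $z=a(x)$, $dx=|J_{a^{-1}}(z)|\,dz$, which is legitimate because $a$ is a diffeomorphism. Writing $I=\int_{\rrd}\psi$, one has $\int_{\rrd}\psi(y-a(x))\,dy=I$ for each fixed $x$, so the first integral equals $I\int_{\rrd}u^2$; on the other hand $\int_{\rrd}\psi(y-a(x))\,dx=\int_{\rrd}\psi(y-z)\,|J_{a^{-1}}(z)|\,dz$, so the second integral is the $u(y)^2$ mass convolved against $\psi$ weighted by $|J_{a^{-1}}|$, which is exactly where the Jacobian hypothesis enters.

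In the first case ($M<1$) I would bound $|J_{a^{-1}}|\le M$ inside that weighted convolution, obtaining
\begin{equation*}
\int_{\rrd}\int_{\rrd} K(x,y)(u(x)-u(y))^2\,dx\,dy \ge 2I\,\Big[(1-\eps)-M\Big(\tfrac1\eps-1\Big)\Big]\int_{\rrd}u^2 ,
\end{equation*}
and then maximize the bracket over $\eps$. The maximizer is $\eps=M^{1/2}$, which lies in $(0,1)$ precisely because $M<1$, and the bracket attains the value $(1-M^{1/2})^2$; dividing by $\int_{\rrd}u^2$ and taking the infimum over $u$ yields the stated bound. The second case ($m>1$) is handled symmetrically: I would instead retain $(1-\eps)\,u(y)^2$ and discard the $u(x)^2$ contribution, apply the lower bound $|J_{a^{-1}}|\ge m$ on the $u(y)^2$ term, and optimize to find $\eps=m^{-1/2}\in(0,1)$ and bracket value $(m^{1/2}-1)^2$.

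There is no deep obstacle here once the symmetrization is in place; the argument is essentially elementary. The one point demanding care is the bookkeeping of the weights: one must make sure the Jacobian factor attaches to exactly the term on which its sign is favorable (the $u(y)^2$ term), and one must choose the orientation of the Young splitting to match the regime, keeping $u(x)^2$ when $M<1$ and keeping $u(y)^2$ when $m>1$. This choice is the only genuine decision in the proof, and it is what distinguishes the two cases.
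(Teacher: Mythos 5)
Your proof is correct and follows essentially the same route as the paper's: the paper uses the identical parametrized inequality $(b-c)^2\ge(1-\theta)\bigl(b^2-\tfrac{c^2}{\theta}\bigr)$, the same change of variables turning $\int_{\rr^d}\psi(y-a(x))\,dx$ into the convolution $\bigl(\psi\ast|J_{a^{-1}}|\bigr)(y)$, and the same optimal choices $\theta=M^{1/2}$ and $\theta=m^{1/2}$ (your $\eps=m^{-1/2}$ with the roles of $u(x)^2$ and $u(y)^2$ swapped is just the reparametrization $\eps=1/\theta$ of the paper's second case). The only cosmetic difference is that you symmetrize the two summands of $K$ explicitly at the outset, where the paper carries the factor $\tfrac12$ implicitly.
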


Concerning upper bounds we have the following less general result.

\begin{theorem} \label{teo.cota.por.arriba}
Let $a$ be a diffeomorphism homogeneous of degree one, that is,
$a(Rx) =R a(x)$. Assume that the kernel is given by
\eqref{forma.nucleo}.
 Then
\begin{equation}\label{cota.por.arriba.intro}
\lambda_1 (\rr^d) \leq 2 \left(\int_{\rr^d}\psi(x)dx\right)
\inf _{\|\phi\|_{L^2(B_1)=1} } \int _{\rr^d}\big(\phi(x)-\phi(a(x))\big)^2dx,
\end{equation}
where the infimum is taken over all functions $\phi$ supported in the unit ball of $\rr^d$.
\end{theorem}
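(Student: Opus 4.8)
The plan is to use the variational characterization \eqref{lambda1}: since $\lambda_1(\rr^d)$ is the infimum of a Rayleigh quotient over all $u\in L^2(\rr^d)$, it suffices to exhibit, for each admissible profile $\phi$, a family of competitors whose quotient tends to the quantity on the right of \eqref{cota.por.arriba.intro}. First I would simplify the numerator. Using $(u(x)-u(y))^2=(u(y)-u(x))^2$ and interchanging the roles of $x$ and $y$ in the term carrying $\psi(x-a(y))$, the double integral in \eqref{lambda1} becomes
\[
\int_{\rr^d}\int_{\rr^d}K(x,y)(u(x)-u(y))^2\,dx\,dy
=2\int_{\rr^d}\int_{\rr^d}\psi(y-a(x))(u(x)-u(y))^2\,dx\,dy.
\]
The substitution $y=a(x)+z$ (for $x$ fixed) turns this into
\[
2\int_{\rr^d}\int_{\rr^d}\psi(z)\big(u(x)-u(a(x)+z)\big)^2\,dz\,dx,
\]
so that the kernel now measures the oscillation of $u$ between $x$ and the point $a(x)$ shifted by a vector $z$ confined to the unit ball.

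Next, given $\phi$ smooth and supported in $B_1$ with $\|\phi\|_{L^2(B_1)}=1$, I would test with the dilated profiles $u_R(x)=\phi(x/R)$, $R>0$, which lie in $L^2(\rr^d)$ and are supported in $B_R$. Here the homogeneity $a(Rx)=Ra(x)$ is decisive: it gives $a(x)/R=a(x/R)$, hence
\[
u_R(x)-u_R(a(x)+z)=\phi(x/R)-\phi\big(a(x/R)+z/R\big).
\]
After the change of variables $w=x/R$ (with $dx=R^d\,dw$) the numerator equals $2R^d\int_{\rr^d}\int_{\rr^d}\psi(z)\big(\phi(w)-\phi(a(w)+z/R)\big)^2\,dz\,dw$, while the denominator is simply $\int_{\rr^d}\phi(x/R)^2\,dx=R^d$. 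The factor $R^d$ therefore cancels, and the Rayleigh quotient of $u_R$ reduces to
\[
2\int_{\rr^d}\int_{\rr^d}\psi(z)\big(\phi(w)-\phi(a(w)+z/R)\big)^2\,dz\,dw.
\]

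Finally I would let $R\to\infty$. Since $\psi$ is supported in $B_1$, the shift $z/R$ is uniformly small on the relevant set, and continuity of $\phi$ gives $\phi(a(w)+z/R)\to\phi(a(w))$; because $\psi$ is bounded with compact support and $\phi$ is bounded with the integrand supported in a fixed bounded set for all $R\ge 1$, dominated convergence yields
\[
\lambda_1(\rr^d)\le 2\Big(\int_{\rr^d}\psi(z)\,dz\Big)\int_{\rr^d}\big(\phi(w)-\phi(a(w))\big)^2\,dw
\]
for every such $\phi$; taking the infimum over $\phi$ then gives \eqref{cota.por.arriba.intro}. The main technical obstacle is precisely this interchange of limit and integral, together with upgrading from smooth $\phi$ to the general $L^2$ profiles over which the infimum is taken. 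The latter follows because $\phi\mapsto\int_{\rr^d}(\phi(x)-\phi(a(x)))^2\,dx$ is continuous on $L^2(B_1)$, the composition $\phi\mapsto\phi\circ a$ being bounded on $L^2$ thanks to the bounded Jacobian of $a^{-1}$, so that smooth functions already suffice to compute the infimum.
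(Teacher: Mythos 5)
Your argument is correct, and it rests on the same key idea as the paper's proof: test the Rayleigh quotient with the dilated profiles $\phi(x/R)$ and exploit the $1$-homogeneity of $a$ to make the quotient converge to $2\left(\int\psi\right)\int(\phi-\phi\circ a)^2$. The execution, however, differs in two genuine ways. The paper first proves a quantitative bound on $\lambda_1(B_R)$ (Lemma~\ref{lem.up}), splitting $\big(\phi(x)-\phi(a(x)+z/R)\big)^2$ via a $(2+\delta)$/$C(\delta)$ Young-type inequality and controlling the remainder by $R^{-2}\int\psi(z)|z|^2\,dz\,\|\nabla\phi\|_{L^2}^2\sup|J_{a^{-1}}|$, and then invokes Theorem~\ref{teo.conver} to pass from $\lambda_1(B_R)$ to $\lambda_1(\rr^d)$ before sending $\delta\to 0$. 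You instead plug $\phi(x/R)$ directly into the whole-space characterization \eqref{lambda1}, so neither Lemma~\ref{lem.up} nor the convergence $\lambda_1(B_R)\to\lambda_1(\rr^d)$ is needed, and you replace the quantitative Taylor estimate by a soft dominated-convergence argument (which is legitimate: for $|z|\le 1$ and $R\ge 1$ the integrand is supported in a fixed bounded set in $w$ and bounded by $C\psi(z)$). The paper's route costs more work but yields an explicit rate in $R$ for $\lambda_1(B_R)$, which has independent interest; yours is shorter and needs only continuity of $\phi$ rather than $\nabla\phi\in L^2$. You also make explicit the final density step -- that smooth $\phi$ suffice because $\phi\mapsto\phi\circ a$ is bounded on $L^2$ when $J_{a^{-1}}$ is bounded -- which the paper leaves implicit (its Lemma~\ref{lem.up} is stated for all $\phi$ but proved for smooth ones); note that boundedness of $J_{a^{-1}}$ is itself an implicit standing assumption in the paper's lemma as well.
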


\begin{remark}
Since we can consider $\phi \geq 0$, we get
$$
\begin{array}{rl}
\displaystyle \int_{\rr^d}\big(\phi(x)-\phi(a(x))\big)^2dx &
\displaystyle = \int_{\rr^d}\phi^2(x)dx+ \int_{\rr^d} \phi^2(a(x))dx
-
2\int_{\rr^d}\phi(x)\phi(a(x))dx \\[10pt]
& \displaystyle \leq \int_{\rr^d}\phi^2(x)dx+ \int_{\rr^d}
\phi^2(a(x))dx.
\end{array}
$$
Hence, from \eqref{cota.por.arriba.intro} we immediately obtain the
following bound
\begin{equation}\label{est.sup}
\lambda_1 (\rr^d) \leq 2 \big(1+ \displaystyle\sup_{x\in \rr^d}
 |J_{a^{-1}}(x)|\big)\left(\int_{\rr^d}\psi(x)dx\right).
\end{equation}
\end{remark}

For invertible linear maps $a$ on $\R^d$ we obtain the following
sharp result.

\begin{theorem}\label{main.result}
Let $K$ be given by \eqref{forma.nucleo} with an invertible linear
map $a(x)=Ax$. Then
\begin{equation}\label{main.44}
\lambda_1 (\R^d) = \lim _{R\rightarrow \infty}\lambda_1(B_R)=
2(1-|\det(A)|^{-1/2})^2\left(\int _{\rr^d}\psi(x)dx\right).
\end{equation}
\end{theorem}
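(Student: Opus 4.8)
The plan is to squeeze $\lambda_1(\rr^d)$ between matching lower and upper bounds, both equal to $2(1-|\det A|^{-1/2})^2\int_{\rr^d}\psi$, and to read off the limit over balls directly from Theorem \ref{teo.conver}. Indeed, the equality $\lambda_1(\rr^d)=\lim_{R\to\infty}\lambda_1(B_R)$ is exactly Theorem \ref{teo.conver} applied to $\Omega=B_1$, since $RB_1=B_R$; so it only remains to identify the common value. For the lower bound, note that $a(x)=Ax$ gives $a^{-1}(x)=A^{-1}x$, hence $J_{a^{-1}}\equiv\det(A^{-1})=|\det A|^{-1}$ is constant. Thus Theorem \ref{teo.cota.por.abajo} applies with $M=m=|\det A|^{-1}$: when $|\det A|>1$ we use the first case and when $|\det A|<1$ the second, and in both the resulting bound is $\lambda_1(\rr^d)\geq 2(1-|\det A|^{-1/2})^2\int_{\rr^d}\psi$, since $(m^{1/2}-1)^2=(1-|\det A|^{-1/2})^2$ as well.

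For the upper bound, observe that a linear map is homogeneous of degree one, $a(Rx)=Ra(x)$, so Theorem \ref{teo.cota.por.arriba} applies and reduces the problem to computing
\[ I:=\inf\Big\{\textstyle\int_{\rr^d}\big(\phi(x)-\phi(Ax)\big)^2dx \;:\; \operatorname{supp}\phi\subset B_1,\ \|\phi\|_{L^2}=1\Big\}, \]
for then $\lambda_1(\rr^d)\le 2I\int_{\rr^d}\psi$ and matching the two bounds forces $I=(1-|\det A|^{-1/2})^2$. The inequality $I\ge(1-|\det A|^{-1/2})^2$ is immediate: the change of variables $y=Ax$ gives $\int\phi^2(Ax)\,dx=|\det A|^{-1}$, and Cauchy--Schwarz yields $\int\phi(x)\phi(Ax)\,dx\le|\det A|^{-1/2}$, whence $\int(\phi-\phi(A\cdot))^2\ge 1+|\det A|^{-1}-2|\det A|^{-1/2}=(1-|\det A|^{-1/2})^2$.

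The heart of the matter is the reverse inequality $I\le(1-|\det A|^{-1/2})^2$, i.e.\ exhibiting a minimizing sequence; the equality case of Cauchy--Schwarz would require $\phi(Ax)=|\det A|^{-1/2}\phi(x)$, which no nonzero function supported in a ball can satisfy, so we must approximate it over many scales. Set $\delta=|\det A|$ and suppose first $\delta>1$. The idea is a telescoping step function built along an orbit of $A$. Since $\delta\ne1$ forces $A$ to have infinite order, one may choose a point $p$ with $A^{-m}p\ne p$ for all $m\ge1$ (the bad set $\bigcup_m\ker(A^m-I)$ is a countable union of proper subspaces, hence not all of $\rr^d$) and a small ball $D=B(p,\rho)$ so that $D,A^{-1}D,\dots,A^{-(N+1)}D$ are pairwise disjoint. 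As these sets are bounded and the quotient $\int(\phi-\phi(A\cdot))^2/\int\phi^2$ is invariant under the dilation $\phi\mapsto\phi(\,\cdot\,/\mu)$ (again because $a$ is homogeneous of degree one), we may rescale so that the support lands in $B_1$. Setting $\phi=\sum_{k=0}^N\delta^{k/2}\mathbf 1_{A^{-k}D}$ and using $|A^{-k}D|=\delta^{-k}|D|$ together with the fact that $\phi(A\cdot)$ is the one-shell shift of $\phi$, a direct computation gives $\int\phi^2=(N+1)|D|$ and $\int(\phi-\phi(A\cdot))^2=\big[1+N(1-\delta^{-1/2})^2+\delta^{-1}\big]|D|$, so the quotient tends to $(1-\delta^{-1/2})^2$ as $N\to\infty$. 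The case $\delta<1$ is identical after stacking the shells as $A^{k}D$ (which now shrink), and the degenerate case $\delta=1$ gives the value $0$, realized either by the same construction (when $A$ has infinite order) or by any $\phi$ invariant under $A$ (when $A$ has finite order, hence is conjugate to an orthogonal map).

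The only delicate point is the disjointness-and-stacking step: one must keep finitely many iterates $A^{\mp k}D$ pairwise disjoint while their volumes scale \emph{exactly} by $\delta^{\mp k}$, which is what makes the telescoping computation exact. This is secured by taking $D$ a generic small ball and invoking the scale invariance of the quotient, so that no hypothesis on the eigenvalue structure of $A$ is needed. Combining $I=(1-|\det A|^{-1/2})^2$ with the two bounds yields $\lambda_1(\rr^d)=2(1-|\det A|^{-1/2})^2\int_{\rr^d}\psi$, which together with Theorem \ref{teo.conver} completes the proof.
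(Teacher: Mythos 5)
Your proposal is correct, and the interesting part --- the construction of a minimizing sequence for $\inf_\phi\int(\phi(x)-\phi(Ax))^2dx$ --- takes a genuinely different and leaner route than the paper. The paper first passes to the real Jordan form $A=CJC^{-1}$, splits $J$ into blocks, and treats each block by a separate lemma: for expansive blocks it builds $\phi_\sigma=\sum_j\sigma^j\chi_{E_j}$ on the \emph{nested} shells $E_j=a^{-j}(F)\setminus a^{-j-1}(F)$ of a forward-invariant set $F$ (nestedness is exactly where expansiveness is used), for unit-modulus blocks it uses indicator sums along a finite orbit of a point, and it then tensors the blocks back together. Your single construction $\phi=\sum_{k=0}^{N}\delta^{k/2}\mathbf 1_{A^{-k}D}$ on disjoint iterates of a generic small ball achieves the same thing for an arbitrary invertible $A$ in one stroke: the telescoping computation only needs the exact volume scaling $|A^{-k}D|=\delta^{-k}|D|$ (true for any measurable set, independent of the eigenvalue structure) plus pairwise disjointness of finitely many iterates, which you correctly secure by picking $p\notin\bigcup_m\ker(A^m-I)$ and shrinking the radius; note that $\phi(Ax)=\delta^{-1/2}\phi(x)$ holds exactly on the $N$ interior shells, so only the two boundary shells spoil the Cauchy--Schwarz equality, giving the $O(1/N)$ error. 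This removes the Jordan-form case analysis entirely and also handles $|\det A|=1$ uniformly (your finite-order remark covers the one exceptional case). Two small points you share with the paper and should at least flag: the radius $\rho$ of $D$ depends on $N$, which is harmless only because you rescale \emph{after} fixing $N$ and the Rayleigh quotient is dilation-invariant; and the test functions are indicator sums while Lemma~\ref{lem.up} is stated with a $\|\nabla\phi\|_{L^2}$ term, so strictly one should mollify (the functional is $L^2$-continuous, so this costs nothing) --- the paper's own minimizing sequences have the same issue.
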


\begin{remark}
Note that for a linear function $a$ the bound
\eqref{est.sup} is not sharp. However, Theorems
\ref{teo.cota.por.abajo} and \ref{teo.cota.por.arriba} provide
lower and upper bounds for $\lambda_1 (\rr^d)$  when $M<1$ or
$m>1$ that depend linearly on $\int \psi$ in terms of the jacobian
of the diffeomorphism $a^{-1}$.
\end{remark}

As an immediate application of our results, we observe that, when
the first eigenvalue is positive, we have exponential decay for
the solutions to the associated evolution problem in $\rr^d$. In
fact, let us consider,
$$
u_t (x,t) = \int_{\rr^d} K(x,y) (u(y,t)- u(x,t))\, dy,
$$
with an initial condition $u(x,0)=u_0 (x) \in L^2 (\rr^d)$.
Multiply by $u(x,t)$ and integrate to obtain
$$
\begin{array}{rl}
\displaystyle \frac12 \frac{d}{dt} \int_{\rr^d} u^2 (x,t) \, dx & \displaystyle =
\int_{\rr^d} \int_{\rr^d} K(x,y) (u(y,t)- u(x,t)) u(x,t)\, dy\, dx
\\[8pt]
& \displaystyle = - \frac12 \int_{\rr^d} \int_{\rr^d} K(x,y)
(u(y,t)- u(x,t))^2 \, dy\, dx
\\[8pt]
& \displaystyle \leq - \frac12 \lambda_1 \int_{\rr^d} u^2 (x,t)\,
dx.
\end{array}
$$
Thus, an exponential decay of $u$ in $L^2$-norm follows
$$
\displaystyle \int_{\rr^d} u^2 (x,t) \, dx \leq
\left(\int_{\rr^d}u^2(x,0) \, dx \right) \cdot e^{-\lambda_1 t}.
$$

The paper is organized as follows: in Section~\ref{Sect.prelim} we
collect some preliminary results and prove Theorem~\ref{teo.conver};
while in Section~\ref{main.results} we collect the proofs of the
lower and upper bounds for the first eigenvalue; we prove
Theorem~\ref{teo.cota.por.abajo}, Theorem~\ref{teo.cota.por.arriba}
and Theorem~\ref{main.result}.

\section{Properties of the first eigenvalue. Proof of Theorem~\ref{teo.conver}}
\label{Sect.prelim}
\setcounter{equation}{0}

First, let us state some known properties of the first eigenvalue
of our nonlocal operator.

\begin{theorem}
For any bounded domain $\Omega$ there exists a principal
eigenvalue  $\lambda_1 (\Omega)$ of problem \eqref{eigenvalue},
i.e. the
 corresponding non-negative eigenfunction $\phi_1(x)$ is strictly positive in $\Omega$.
\end{theorem}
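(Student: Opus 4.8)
The plan is to recast the Dirichlet eigenvalue problem \eqref{eigenvalue} as a spectral problem for a bounded self-adjoint operator on $L^2(\Omega)$ and to produce the principal eigenvalue by the direct method, using the variational formula \eqref{lambdar}. Writing $h(x)=\int_{\rrd}K(x,y)\,dy$, which is nonnegative and bounded on $\Omega$ because $\psi$ is bounded and compactly supported and $a$ is a diffeomorphism, the operator in \eqref{eigenvalue} acting on functions extended by zero becomes $Tu(x)=h(x)u(x)-\int_\Omega K(x,y)u(y)\,dy=Bu(x)-\mathcal{K}u(x)$, where $B$ is multiplication by $h$ and $\mathcal{K}$ is the integral operator with kernel $K|_{\Omega\times\Omega}$. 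Since $K\in L^2(\Omega\times\Omega)$ is symmetric and nonnegative, $\mathcal{K}$ is a compact, self-adjoint, positivity-preserving operator, and one checks the identity $\langle Tu,u\rangle=\frac12\int_{\rrd}\int_{\rrd}K(x,y)(\tilde u(x)-\tilde u(y))^2\,dx\,dy\ge 0$, so that $\lambda_1(\Omega)$ in \eqref{lambdar} is exactly $\inf_{\|u\|_{L^2(\Omega)}=1}\langle Tu,u\rangle$.

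First I would record the structure of the spectrum. By Weyl's theorem the compact perturbation $\mathcal{K}$ leaves the essential spectrum unchanged, so $\sigma_{\mathrm{ess}}(T)=\sigma_{\mathrm{ess}}(B)$ lies in $[h_-,h_+]$, with $h_-=\inf_\Omega h$; consequently everything below $h_-$ consists of isolated eigenvalues of finite multiplicity. Next I would run the direct method: take a minimizing sequence $u_n$ with $\|u_n\|_{L^2(\Omega)}=1$, pass to a weak limit $u_n\rightharpoonup u$, use compactness of $\mathcal{K}$ to get $\langle\mathcal{K}u_n,u_n\rangle\to\langle\mathcal{K}u,u\rangle$ together with weak lower semicontinuity of $u\mapsto\int_\Omega h\,u^2$ (valid since $h\ge 0$) to conclude $\langle Tu,u\rangle\le\lambda_1(\Omega)$. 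The only thing that can go wrong is $u\equiv 0$, and this is excluded precisely when $\lambda_1(\Omega)<h_-$: in that case a weakly null sequence would satisfy $\liminf\langle Tu_n,u_n\rangle\ge h_->\lambda_1(\Omega)$, a contradiction. Thus the infimum is attained and $\lambda_1(\Omega)$ is a genuine eigenvalue.

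The main obstacle is exactly this strict gap $\lambda_1(\Omega)<h_-$. To establish it I would construct a test function $u\ge 0$ concentrated where $h$ is close to $h_-$ and supported so that the coupling term $\langle\mathcal{K}u,u\rangle=\int_\Omega\int_\Omega K(x,y)u(x)u(y)\,dx\,dy$ is strictly positive; exploiting that $K(x,y)$ is positive where $y$ lies near $a(x)$ (so that the mass of $u$ near a point must be paired with its image under $a$ rather than with itself near the diagonal), one arranges $\int_\Omega(h-h_-)u^2<\langle\mathcal{K}u,u\rangle$, which pushes the Rayleigh quotient below $h_-$. This is where the hypotheses on $\psi$ and on $a$, together with the geometry of $\Omega$, genuinely enter; if $\lambda_1(\Omega)=h_-$ the infimum need not be attained, so this step cannot be bypassed.

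Finally, for the sign of the eigenfunction I would note that $|u|$ is again a minimizer, since $(\tilde u(x)-\tilde u(y))^2\ge(|\tilde u(x)|-|\tilde u(y)|)^2$ and $K\ge 0$ give $\langle T|u|,|u|\rangle\le\langle Tu,u\rangle$; hence we may take $\phi_1=|u|\ge 0$ with $\phi_1\not\equiv 0$, solving $(h(x)-\lambda_1)\phi_1(x)=\int_\Omega K(x,y)\phi_1(y)\,dy$. Because $h-\lambda_1>0$ and $K\ge 0$, the sets $Z=\{\phi_1=0\}$ and $P=\{\phi_1>0\}$ must satisfy $K=0$ a.e. on $Z\times P$; the irreducibility of $K$ over $\Omega$ --- which follows from $\psi>0$ near the origin and $a$ being a diffeomorphism, so that the jump kernel connects $\Omega$ --- then forces $|Z|=0$, that is $\phi_1>0$ a.e. in $\Omega$, which completes the proof.
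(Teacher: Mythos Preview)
The paper's own proof is the single sentence ``It follows from \cite{KR}'' (Krein--Rutman), so your variational route via the decomposition $T=B-\mathcal{K}$, Weyl's theorem, and the direct method is genuinely different and more explicit. The setup, the Rayleigh--quotient identity, the compactness of $\mathcal{K}$, the location of $\sigma_{\mathrm{ess}}(T)$, and the argument that a minimizer exists \emph{once} $\lambda_1(\Omega)<h_-$ are all fine.

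The gap is exactly the step you yourself flag as ``the main obstacle'': the strict inequality $\lambda_1(\Omega)<h_-$. Your proposed construction does not close it under the paper's standing hypotheses. Since $K(x,y)=\psi(y-a(x))+\psi(x-a(y))$ is positive only near the graph of $a$ (and of $a^{-1}$), not near the diagonal, a test function concentrated at a minimizer $x_0$ of $h$ has essentially no self-coupling; to make $\langle\mathcal{K}u,u\rangle>0$ you must also put mass near $a(x_0)$ or $a^{-1}(x_0)$, and nothing in the hypotheses guarantees that these points lie in $\Omega$, nor that the resulting increase in $\int_\Omega(h-h_-)u^2$ is dominated by the gain in coupling. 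You also invoke ``$\psi>0$ near the origin'' for the irreducibility step that yields strict positivity of $\phi_1$; the paper only assumes $\psi\ge 0$, bounded, and supported in $B_1$. In short, the outline is sound and more transparent than a bare citation, but the spectral-gap step is a genuine hole that either needs an explicit construction tailored to the geometry of $a$ and $\Omega$, or an extra hypothesis such as $\psi>0$ on a neighbourhood of the origin. The Krein--Rutman route the paper invokes sidesteps the Rayleigh-quotient gap by working instead with positivity of a suitable compact resolvent, though it, too, ultimately rests on an irreducibility of $K$ over $\Omega$ that the paper leaves implicit.
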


\begin{proof} It follows from  \cite{KR}.
\end{proof}

\begin{theorem}
The first eigenvalue of problem \eqref{eigenvalue} satisfies
\begin{equation}\label{lambdar.prelim}
\lambda_1(\Omega)=\inf _{u\in L^2(\Omega)}
\frac{\displaystyle\int _{\rr^d}\int_{\rrd}
K(x,y)(\tilde u(x)-\tilde u(y))^2\, dx\,
dy}{\displaystyle\int_{\Omega}u^2(x)\, dx}.
\end{equation}
\end{theorem}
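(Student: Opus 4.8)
The plan is to realize the quotient on the right-hand side of \eqref{lambdar.prelim} as the Rayleigh quotient of the self-adjoint operator $T$ acting on functions supported in $\Omega$, to prove that the infimum is attained, and then to identify the minimizer with the principal eigenfunction $\phi_1$ supplied by the previous theorem. First I would record the symmetrization identity: since $K(x,y)=K(y,x)$, Fubini's theorem gives, for every $u\in L^2(\Omega)$ with zero extension $\tilde u$,
\[
\int_{\rrd} T(\tilde u)(x)\,\tilde u(x)\,dx=\tfrac12\int_{\rrd}\int_{\rrd}K(x,y)\big(\tilde u(x)-\tilde u(y)\big)^2\,dx\,dy,
\]
so that the numerator of \eqref{lambdar.prelim} is precisely the quadratic form associated with $T$. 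Because $K\ge 0$ this form is nonnegative, hence the quotient is bounded below by $0$; denote its infimum by $\mu\ge 0$. Moreover, since $(|\tilde u(x)|-|\tilde u(y)|)^2\le(\tilde u(x)-\tilde u(y))^2$ while the denominator is unchanged, replacing $u$ by $|u|$ does not increase the quotient, so the infimum may be computed over nonnegative $u$.

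The crucial step is the attainment of $\mu$. The energy above controls no derivative of $u$, so compactness cannot be extracted from the form itself; instead I would use the structure $T=M_h-L$, where $M_h$ is multiplication by the bounded function $h(x)=\int_{\rrd}K(x,y)\,dy$ and $Lu(x)=\int_{\Omega}K(x,y)u(y)\,dy$. Since $\psi$ is bounded with support in the unit ball, one has $K\in L^2(\Omega\times\Omega)$, so $L$ is Hilbert--Schmidt, hence a compact self-adjoint operator on $L^2(\Omega)$. Taking a minimizing sequence $u_n$ with $\|u_n\|_{L^2(\Omega)}=1$, I would extract a weakly convergent subsequence $u_n\rightharpoonup u^\ast$; the compactness of $L$ promotes this to strong convergence of the cross term $\int_\Omega\int_\Omega K(x,y)u_n(x)u_n(y)\,dx\,dy$, while the diagonal term is weakly lower semicontinuous, so that $u^\ast$ attains the infimum with $\|u^\ast\|_{L^2(\Omega)}=1$. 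This compactness argument, which replaces the Rellich compactness available in the local problem \eqref{eig-local}, is the main obstacle.

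It then remains to identify $\mu$. Computing the first variation of the quotient at $u^\ast$ shows that $u^\ast$ solves the Euler--Lagrange equation, which is exactly the eigenvalue problem \eqref{eigenvalue} with $u^\ast\equiv0$ on $\rr^d\setminus\Omega$; thus $\mu$ is an eigenvalue with eigenfunction $u^\ast$. Because the minimization can be carried out over $u\ge 0$, the function $u^\ast$ may be taken nonnegative, and a nonnegative eigenfunction corresponds to the principal eigenvalue by the simplicity and positivity statement of the preceding theorem, in the spirit of the Krein--Rutman theorem \cite{KR}; hence $\mu=\lambda_1(\Omega)$. Conversely, the principal eigenfunction $\phi_1$ is an admissible competitor in the infimum, and the symmetrization identity together with the equation it solves shows that $\phi_1$ realizes this minimal value, giving the matching inequality and closing the argument.
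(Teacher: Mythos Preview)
The paper does not actually prove this statement; its entire proof is the single line ``See \cite{GMR-autov}.'' So there is nothing in the paper to compare your argument against, and your outline---symmetrization identity, compactness of the integral part $L$ as a Hilbert--Schmidt operator on $L^2(\Omega)$, Euler--Lagrange, identification through positivity and Krein--Rutman---is a reasonable reconstruction of how such a variational characterization is typically established.

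There is, however, a genuine gap in your attainment step. From $u_n\rightharpoonup u^\ast$ in $L^2(\Omega)$ you only get $\|u^\ast\|_{L^2}\le 1$, not equality; weak lower semicontinuity of $\int_\Omega h\,u_n^2$ together with convergence of the cross term yields $Q(u^\ast)\le\mu$, but if $\|u^\ast\|<1$ the quotient $Q(u^\ast)/\|u^\ast\|^2$ may well exceed $\mu$, and nothing you wrote excludes $u^\ast=0$. This is exactly the place where the lack of a Rellich-type compactness bites: the energy does not force strong $L^2$ convergence of the minimizing sequence.

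A clean way to close the gap, using the ingredients you already have, is to argue spectrally rather than by direct minimization. With $T=M_h-L$ bounded self-adjoint and $L$ compact, Weyl's theorem gives $\sigma_{\mathrm{ess}}(T)=\sigma_{\mathrm{ess}}(M_h)=\overline{\mathrm{ess\,ran}\,h}$. From the equation $(h-\lambda_1)\phi_1=L\phi_1\ge 0$ and $\phi_1>0$ one sees $h\ge\lambda_1$ on $\Omega$, so $\lambda_1\le\inf\sigma_{\mathrm{ess}}(T)$. If the bottom of the spectrum $\mu$ were strictly below $\lambda_1$ it would therefore lie below the essential spectrum and hence be an isolated eigenvalue; the corresponding eigenfunction can be taken nonnegative by your $|u|$ observation, and then the Krein--Rutman uniqueness of the positive eigenfunction forces $\mu=\lambda_1$, a contradiction. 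Combined with the trivial inequality $\mu\le$ (Rayleigh quotient at $\phi_1$), this gives both directions without needing $\|u^\ast\|=1$.
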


\begin{proof} See \cite{GMR-autov}.
\end{proof}

Now, to simplify the presentation, we prove
Theorem~\ref{teo.conver} in the special case of balls $B_R$ that
are centered at the origin with radius $R$ (we will use this
notation in the rest of the paper) and next we deduce from this
fact the general case, $\Omega$ a bounded domain.

\begin{lemma}\label{asimptoticlimit}
Let $\lambda_1(\rr^d)$ be defined by \eqref{lambda1}. Then
\begin{equation}\label{lambdalimit}
\lambda_1 (\rr^d)=\lim _{R\rightarrow \infty} \lambda_1(B_R).
\end{equation}
\end{lemma}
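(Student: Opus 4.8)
The plan is to combine an elementary monotonicity argument, which yields one of the two inequalities for free, with a truncation argument for the reverse one; the whole matter reduces to the assertion that the infimum defining $\lambda_1(\rr^d)$ in \eqref{lambda1} is unchanged when one restricts to competitors with compact support. Throughout write $Q(u)=\int_{\rr^d}\int_{\rr^d}K(x,y)(u(x)-u(y))^2\,dx\,dy$ for the numerator. First I would record monotonicity: for $R_1<R_2$, extending a function of $L^2(B_{R_1})$ by zero produces a competitor in $L^2(B_{R_2})$ with the same Rayleigh quotient, so by \eqref{lambdar.prelim} one has $\lambda_1(B_{R_2})\le\lambda_1(B_{R_1})$; moreover any such zero extension is an admissible competitor in \eqref{lambda1}, whence $\lambda_1(B_R)\ge\lambda_1(\rr^d)$ for every $R$. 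Thus $R\mapsto\lambda_1(B_R)$ is nonincreasing and bounded below by $\lambda_1(\rr^d)$, so the limit $L:=\lim_{R\to\infty}\lambda_1(B_R)$ exists and satisfies $L\ge\lambda_1(\rr^d)$.

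For the reverse inequality $L\le\lambda_1(\rr^d)$ it suffices to produce, for each $\eps>0$, a nonzero $v\in L^2(\rr^d)$ with $\mathrm{supp}\,v\subset B_{R_0}$ and $Q(v)/\|v\|_{L^2}^2<\lambda_1(\rr^d)+\eps$: such a $v$ is admissible for $\lambda_1(B_{R_0})$, so by monotonicity $L\le\lambda_1(B_{R_0})\le Q(v)/\|v\|_{L^2}^2<\lambda_1(\rr^d)+\eps$, and letting $\eps\to0$ finishes the proof. Starting from a near-minimizer $u\in L^2(\rr^d)$ of \eqref{lambda1}, I would first replace $u$ by its amplitude truncation $u^{(n)}=\max(-n,\min(n,u))$; since this operation is a contraction, $|u^{(n)}(x)-u^{(n)}(y)|\le|u(x)-u(y)|$, so $Q(u^{(n)})\le Q(u)$, and dominated convergence (the integrand is dominated by the integrable $K(x,y)(u(x)-u(y))^2$) together with $\|u^{(n)}\|_{L^2}\to\|u\|_{L^2}$ shows that the quotient of $u^{(n)}$ tends to that of $u$. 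Hence we may assume the near-minimizer $u$ is bounded.

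It then remains to truncate in space. Setting $u_R=u\,\chi_{B_R}$ and splitting $\rr^d\times\rr^d$ according to membership in $B_R$, the symmetry of $K$ gives $Q(u_R)=I_R+2\,II_R$, where $I_R=\int_{B_R}\int_{B_R}K(x,y)(u(x)-u(y))^2\,dx\,dy$ and $II_R=\int_{B_R}u(x)^2\big(\int_{\rr^d\setminus B_R}K(x,y)\,dy\big)\,dx$. The term $I_R$ increases to $Q(u)$ by monotone convergence, and $\|u_R\|_{L^2}\to\|u\|_{L^2}$, so everything hinges on the boundary term $II_R\to0$. Here the structure \eqref{forma.nucleo} is essential: because $\psi$ is supported in the unit ball, $K(x,\cdot)$ is supported in $B_1(a(x))\cup a^{-1}(B_1(x))$, so $h(x):=\int_{\rr^d}K(x,y)\,dy$ is finite for every $x$ and the tail $\int_{\rr^d\setminus B_R}K(x,y)\,dy\to0$ pointwise as $R\to\infty$; dominated convergence with dominating function $u(x)^2h(x)$ then yields $II_R\to0$.

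The main obstacle is precisely the control of this boundary term $II_R$. Its pointwise vanishing is a clean consequence of the finite range of $K$, but the legitimacy of dominated convergence requires the dominating function $u(x)^2h(x)$ to be integrable, and a competitor with $Q(u)<\infty$ need not satisfy $\int_{\rr^d}u^2h<\infty$ when $h$ fails to be bounded. I would therefore expect the delicate point to be organizing the estimate of $II_R$ so that the finite range of $K$ localizes the relevant interactions near $\partial B_R$, where the $L^2$-smallness of the tail of $u$ can be exploited, rather than relying on a global bound for $h$; this is the one step where the precise form \eqref{forma.nucleo} of the kernel, and in particular the compact support of $\psi$, must be used in full.
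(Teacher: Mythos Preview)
Your overall strategy matches the paper's exactly: monotonicity in $R$ gives one inequality, and spatial truncation $u_R=u\,\chi_{B_R}$ of a near-minimizer $u\in L^2(\rr^d)$ gives the other. The difference lies only in how the convergence $Q(u_R)\to Q(u)$ is established.

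The paper does not split $Q(u_R)$ into an interior piece $I_R$ and a boundary piece $II_R$; it applies dominated convergence directly to the full double integral, using the pointwise bound
\[
K(x,y)\bigl(u_R(x)-u_R(y)\bigr)^2\;\le\;2K(x,y)\bigl(u^2(x)+u^2(y)\bigr),
\]
and then checks that the right-hand side lies in $L^1(\rr^d\times\rr^d)$. By symmetry this integrability is exactly the condition $\int u^2 h<\infty$ that you isolated, with $h(x)=\int_{\rr^d}K(x,y)\,dy$. The paper disposes of it by computing $h$ from the explicit structure \eqref{forma.nucleo}: after a change of variables,
\[
h(x)=\int_{\rr^d}\psi(z)\,dz+\int_{\rr^d}\psi(x-z)\,|J_{a^{-1}}(z)|\,dz,
\]
which is bounded uniformly in $x$ once $|J_{a^{-1}}|$ is bounded---a condition that holds in all the applications in the paper (Theorems~\ref{teo.cota.por.abajo}--\ref{main.result}) and is tacitly treated as a standing hypothesis. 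With $h\in L^\infty$ one has $\int u^2 h\le\|h\|_\infty\|u\|_{L^2}^2<\infty$, and the dominated-convergence step is immediate.

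Thus the point you flagged as delicate is resolved by the elementary observation that $h\in L^\infty(\rr^d)$. Your amplitude truncation is then unnecessary, and so is the proposed localization near $\partial B_R$; note in fact that the latter would not obviously work for a general diffeomorphism $a$, since $K(x,\cdot)$ is supported near $a(x)$ and near $a^{-1}(B_1(x))$ rather than near $x$, and these sets need not lie close to $\partial B_R$ even when $x$ does.
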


\begin{proof}
First of all, observe that for any $R_1\leq R_2$ we have
$B_{R_1}\subset B_{R_2}$ and then
$$\lambda_1(B_{R_1})\geq \lambda _1(B_{R_2})>0.$$
Then we deduce that there exists the limit
$$\lim _{R\rightarrow \infty} \lambda_1(B_{R})\geq 0.$$

{\textbf{Step I.}} Let us choose $u\in L^2(B_R)$. By the
definition of $\lambda_1(\rr^d)$ we get
$$\frac {\displaystyle
\int _{\rr^d}\int_{\rr^d} K(x,y)(\tilde u(x)-\tilde u(y))^2dxdy}{\displaystyle\int _{B_R}u^2(x)dx}
=\frac{\displaystyle\int _{\rr^d}\int_{\rr^d} K(x,y)(\tilde
u(x)-\tilde u(y))^2dxdy}{\displaystyle\int_{\rr^d}\tilde u^2(x)dx}
\geq \lambda_1(\rr^d).$$
Taking the infimum in the right hand side over all functions $u\in L^2(B_R)$ we obtain that for any $R>0$
\begin{equation}\label{est.1}
\lambda_1(B_R)\geq \lambda_1 (\rr^d).
\end{equation}

{\textbf{Step II.}} Let be $\eps>0$. Then there exists $u_\eps\in L^2(\rr)$ such that
\begin{equation}\label{eps.est}
\lambda_1(\rr^d)+
\eps\geq \frac {\displaystyle\int _{\rr^d}\int_{\rrd} K(x,y)(u_\eps(x)-u_\eps(y))^2dxdy}
{\displaystyle\int _{\rrd}u_\eps^2(x)dx}.
\end{equation}
We choose  $u_{\eps,R}$ defined by
$$u_{\eps,R}(x)=u_\eps (x)\chi_{B_R}(x).$$
We claim that
\begin{equation}\label{est.2}
\int _{B_R} u^2_{\eps,R}(x)dx\rightarrow \int_{\rr^d} u_\eps^2(x)dx
\end{equation}
and
\begin{equation}\label{est.3}
\int_{\rr^d}\int_{\rrd} K(x,y)( u_{\eps,R}(x)- u_{\eps,R}(y))^2\, dx\, dy\rightarrow
\int_{\rr^d}\int_{\rrd} K(x,y)( u_\eps(x)-u_\eps(y))^2\, dx\, dy.
\end{equation}
Assume these claims for the momment; using that $u_{\eps,R}$
vanishes outside the ball $B_R$ and the definition of
$\lambda_1(B_R)$ we get
$$\frac{\displaystyle\int _{\rr^d}\int_{\rrd} K(x,y)( u_{\eps,R}(x)- u_{\eps,R}(y))^2dxdy}
{\displaystyle\int _{B_R} u^2_{\eps,R}(x)dx}
\geq \lambda_1(B_R).$$
Using claims \eqref{est.2} and \eqref{est.3} and taking $R\rightarrow\infty $ we obtain
$$\frac{\displaystyle\int_{\rr^d}\int_{\rrd} K(x,y)( u_\eps(x)- u_\eps(y))^2dxdy}
{\displaystyle\int _{\rr^d} u_\eps^2(x)dx}
 \geq \lim _{R\rightarrow \infty} \lambda_1(B_R).$$
By \eqref{eps.est}, for any $\eps>0$, we have
$\lambda_1(\rr^d)+\eps\geq \lim _{R\rightarrow \infty}
\lambda_1(B_R) $. Thus
 $$\lambda_1(\rr^d)\geq \lim _{R\rightarrow \infty}\lambda_1(B_R)  .$$
Using now \eqref{est.1}
 the proof of \eqref{lambdalimit} is finished.

It remains to prove  claims \eqref{est.2} and \eqref{est.3}.
The first claim follows from Lebesgue's dominated convergence
theorem, since $|u_{\eps,R}|\leq |u_\eps|\in L^2(\rrd)$. For the
second one we have that
$$u_{\eps,R}(x)- u_{\eps,R}(y)\rightarrow u_{\eps}(x)-u_{\eps}(y),\quad\text{as}\quad R\rightarrow \infty$$
and
\begin{equation}\label{est.5}
K(x,y)| u_{\eps,R}(x)- u_{\eps,R}(y)|^2\leq 2 K(x,y)(
u_{\eps,R}^2(x)+u_{\eps,R}^2(y))
\leq 2 K(x,y)(u^2_\eps(x)+u^2_\eps(y)).
\end{equation}
We  show that under the assumptions on $K$ the right hand side in
\eqref{est.5} belongs to $L^1(\rrd\times\rrd)$:
\begin{align*}
\int _{\rrd} \int _{\rrd} K(x,y)(u^2_\eps(x)&+u^2_\eps(y))dx dy=2\int_{\rrd} \int_{\rrd} K(x,y)u^2_\eps(x)dxdy\\
&=2\int _{\rrd}u^2_\eps(x)dx \int _{\rrd} K(x,y)dy\leq 2\sup _{x\in
\rrd} \int_{\rr^d}
K(x,y)dy \int_{\rrd} u_\eps ^2(x)dx\\
&\leq 2\int_{\R^d} \psi(x) (1+|J_{a^{-1}}(x)|)dx\int_{\rrd} u_\eps
^2(x)d \leq C \int _{\rrd} u_\eps ^2(x)dx.
\end{align*}
Applying now Lebesgue's convergence theorem we obtain
\eqref{est.3}.
\end{proof}

When we consider dilations of a domain $\Omega$ with $0
\in  \Omega$ we get the same limit. This provides a proof of Theorem~\ref{teo.conver}.

\begin{proof}[Proof of Theorem~\ref{teo.conver}] Let us
consider $B_{r_1} \subset \Omega \subset B_{r_2}$ then
$$
\lambda_1 ( R B_{r_1}) \geq \lambda_1 ( R \Omega) \geq
\lambda_1 ( R B_{r_2}),
$$
and we just observe that
$$
\lim_{R \to \infty} \lambda_1 ( R B_{r_1}) = \lim_{R \to \infty} \lambda_1 ( R B_{r_1}) =
\lambda_1 (\rr^d).
$$
This ends the proof. \end{proof}

\section{Proofs of lower and upper bounds for the first eigenvalue}
\label{main.results}
\setcounter{equation}{0}

In this section we obtain estimates on $\lambda_1(\rr^d)$ defined by
\eqref{lambda1}. First we prove Theorem~\ref{teo.cota.por.abajo}.


\begin{proof}[Proof of Theorem \ref{teo.cota.por.abajo}]
First of all, let us perform the following computations: let
$\theta$ be a positive constant which will be fixed latter.
 Using the elementary inequality
 \begin{align*}
 (b-c)^2=b^2+c^2-2bc\geq b^2+c^2-\theta b^2-\frac 1\theta c^2=
 (1-\theta)(b^2-\frac {c^2}{\theta})
 \end{align*}
 we get
 \begin{align*}
 \iint_{\rr^{2d}} \psi (y-a(x))&(u(x)-u(y))^2 dxdy\geq
 (1-\theta)\iint_{\rr^{2d}} \psi(y-a(x))\Big(u^2(x)-\frac {u^2(y)}{\theta}\Big)dxdy \\
 &=(1-\theta)\Big(\int_{\rr^d}u^2(x)dx\int_{\rr^d}\psi(y)dy-\frac 1\theta
 \int_{\rr^d}u^2(y)\int _{\rr^d}\psi(y-a(x))dxdy
 \Big)\\
 &=(1-\theta)\int_{\rr^d}u^2(x)\Big(\int_{\rr^d}\psi(y)dy-\frac 1\theta\int_{\rr^d}\psi(x-a(y))dy
 \Big)dx\\
 &=(1-\theta)\int_{\rr}u^2(x)\Big(\int_{\rr^d}\psi(y)dy-\frac 1\theta\int_{\rr^d}\psi(x-y)| |J_{a^{-1}}(y)||dy
 \Big)dx\\
 &=\frac{1-\theta}\theta \int_{\rr^d}\psi(y)dy\int_{\rr^d}u^2(x)
 \Big(\theta-\frac{(\psi \ast |J_{a^{-1}}|\big)(x)}{ \int_{\rr^d}\psi(y)dy}
 \Big)dx.
   \end{align*}
Then
 \begin{align*}
\frac12 \iint_{\rr^{2d}}
 K(x,y)&(u(x)-u(y))^2 \,  dx\, dy\\
 \geq &\left\{
 \begin{array}{ll}
\displaystyle  \frac{1-\theta}\theta
\left( \int_{\rr^d}\psi(y)dy\right)\int_{\rr^d}u^2(x)\Big(\theta-\frac{\sup_{x\in \rr^d} \psi
 \ast |J_{a^{-1}}|}{ \int_{\rr^d}\psi(y)dy}
 \Big)dx,& \qquad \theta<1, \\[15pt]
\displaystyle \frac{1-\theta}\theta
\left( \int_{\rr^d}\psi(y)\, dy \right)
 \int_{\rr^d}u^2(x)\Big(\theta-\frac{\inf \psi \ast |J_{a^{-1}}|}{ \int_{\rr^d}\psi(y)\, dy}
 \Big)\, dx,& \qquad \theta>1.
 \end{array}
 \right.\\[10pt]
 \geq &
 \left\{
 \begin{array}{ll}
\displaystyle  \frac{1-\theta}\theta
\left( \int_{\rr^d}\psi(y)dy\right)\int_{\rr^d}u^2(x)\Big(\theta-M \Big)dx,& \qquad\theta<1, \\[15pt]
\displaystyle \frac{\theta-1}\theta
\left( \int_{\rr^d}\psi(y)\, dy \right)
 \int_{\rr^d}u^2(x)\Big(m-\theta
 \Big)\, dx,& \qquad \theta>1.
 \end{array}
 \right.
  \end{align*}
In the first case we choose $\theta=M^{1/2}$. In the second case
we get $\theta=m^{1/2}$.

Therefore, the statement holds from the definition of $\lambda_1
(\rr^d)$.
 \end{proof}


In the following we deal with upper bounds for the first
eigenvalue.

First, let us state a lemma with an upper bound for $\lambda_1(B_R)$
in terms of the radius of the ball, $R$, and the function $\psi$.
Note that here we are assuming that $a$ is $1-$homogeneous.

\begin{lemma}\label{lem.up}
Let $K(x,y)=\psi(y-a(x))+\psi(x-a(y))$ with an $1-$homogeneous map
$a$. For every $\delta>0$ there exists a constant $C(\delta)$ such
that the following
\begin{align*}
\lambda_1(B_R)
&\leq (2+\delta)\int _{\rr^d}\psi(z)dz\int _{\rr^d}\Big(\phi( x)-\phi( {a(x)})\Big)^2dx\\
&\quad +\frac{C(\delta)}{R^2}\int _{\rr^d}\psi(z)|z|^2dz  \int _{\rr^d} |\nabla \phi (x )|^2dx \sup _{y\in B_{1+1/R}}|J_{a^{-1}}(y) |
\end{align*}
 holds for  any function $\phi$ supported in the unit ball with
$\|\phi\|_{L^2(B_1)}=1$ and all $R>0$.
\end{lemma}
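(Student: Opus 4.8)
The plan is to feed a rescaled copy of $\phi$ into the variational characterization \eqref{lambdar.prelim} of $\lambda_1(B_R)$. Since $\phi$ is supported in the unit ball, the function $u_R(x)=\phi(x/R)$ is supported in $B_R$ and is therefore admissible. Its denominator is immediate: the change of variables $w=x/R$ gives
$$\int_{B_R}u_R^2(x)\,dx=R^d\int_{B_1}\phi^2(w)\,dw=R^d.$$
For the numerator I first exploit that both $K(x,y)=\psi(y-a(x))+\psi(x-a(y))$ and $(u_R(x)-u_R(y))^2$ are symmetric under $x\leftrightarrow y$, so the double integral equals twice the contribution of the single term $\psi(y-a(x))$. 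The substitution $z=y-a(x)$ at fixed $x$ then turns the numerator into
$$2\int_{\rr^d}\int_{\rr^d}\psi(z)\big(u_R(x)-u_R(a(x)+z)\big)^2\,dz\,dx.$$

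Next I separate the ``geometric'' increment from the ``smoothing'' increment. Writing $u_R(x)-u_R(a(x)+z)=\big[u_R(x)-u_R(a(x))\big]+\big[u_R(a(x))-u_R(a(x)+z)\big]$ and applying $(p+q)^2\le(1+\tfrac{\delta}{2})p^2+(1+\tfrac{2}{\delta})q^2$, I get a main term carrying the prefactor $2(1+\tfrac{\delta}{2})=2+\delta$ and an error term carrying $C(\delta)=2(1+\tfrac{2}{\delta})$. The decisive use of $1$-homogeneity of $a$ enters in the main term: since $a(x)/R=a(x/R)$, we have $u_R(x)-u_R(a(x))=\phi(x/R)-\phi(a(x/R))$, and rescaling by $w=x/R$ yields
$$(2+\delta)\Big(\int_{\rr^d}\psi(z)\,dz\Big)R^d\int_{\rr^d}\big(\phi(w)-\phi(a(w))\big)^2\,dw,$$
which after dividing by $R^d$ is exactly the first term of the claimed bound.

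The error term $C(\delta)\int_{\rr^d}\int_{\rr^d}\psi(z)\big(u_R(a(x))-u_R(a(x)+z)\big)^2\,dz\,dx$ carries the real work. I change variables $w=a(x)$, which introduces the factor $|J_{a^{-1}}(w)|$, and estimate the increment by the fundamental theorem of calculus, $u_R(w)-u_R(w+z)=-\tfrac1R\int_0^1\nabla\phi((w+sz)/R)\cdot z\,ds$, so by Cauchy--Schwarz
$$\big(u_R(w)-u_R(w+z)\big)^2\le\frac{|z|^2}{R^2}\int_0^1\big|\nabla\phi\big((w+sz)/R\big)\big|^2\,ds.$$
Because $\phi$ is supported in $B_1$ and $\psi$ in $B_1$, the integrand vanishes unless $|w|\le R+1$, which lets me pull out $\sup_{|w|\le R+1}|J_{a^{-1}}(w)|$; rescaling $v=(w+sz)/R$ then makes the inner $w$-integral equal to $R^d\int_{\rr^d}|\nabla\phi|^2$ independently of $s$ and $z$. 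Dividing by $R^d$ leaves precisely the second term, with $C(\delta)=2(1+2/\delta)$.

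The one point needing care — and the reason the supremum in the statement is over $B_{1+1/R}$ rather than the a priori larger ball $B_{R+1}$ — is that $a^{-1}$ inherits $1$-homogeneity from $a$, so its Jacobian $J_{a^{-1}}$ is $0$-homogeneous, i.e.\ constant along rays through the origin. Hence $\sup_{|w|\le R+1}|J_{a^{-1}}(w)|=\sup_{y\in B_{(R+1)/R}}|J_{a^{-1}}(y)|=\sup_{y\in B_{1+1/R}}|J_{a^{-1}}(y)|$, matching the claim. I expect this homogeneity bookkeeping, rather than any analytic estimate, to be the main subtlety. Finally, the gradient step is only meaningful when $\int_{\rr^d}|\nabla\phi|^2<\infty$; for $\phi\notin H^1$ the right-hand side is infinite and the inequality is trivial, so it suffices to carry out the argument for smooth compactly supported $\phi$.
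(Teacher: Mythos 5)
Your proof is correct and follows essentially the same route as the paper's: the same test function $\phi(x/R)$ in the variational characterization, the same symmetrization and substitution $z=y-a(x)$, the same $\delta$-weighted splitting of the increment through the intermediate point $a(x)$ (giving $2+\delta$ and $C(\delta)=2(1+2/\delta)$), and the same fundamental-theorem-of-calculus plus Cauchy--Schwarz treatment of the error term. The only cosmetic difference is in how the supremum over $B_{1+1/R}$ arises: the paper shifts the integration variable so that the argument of $J_{a^{-1}}$ stays within distance $1/R$ of the support of $\nabla\phi$, whereas you first obtain $\sup_{B_{R+1}}$ and then invoke the $0$-homogeneity of $J_{a^{-1}}$ inherited from the $1$-homogeneity of $a$ --- both are valid under the lemma's hypotheses and yield the identical constant.
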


\begin{proof}
Let $\phi$ be a smooth function supported in the unit ball with $\int _{B_1}\phi^2(x)dx=1$.
Taking as a test function $\phi_R(x)=\phi(x/R)$ in the variational characterization
\eqref{lambdar}, we obtain
\begin{align*}
\lambda_1(B_R)&\leq\frac{\displaystyle\int _{\rr^d}\int_{\rrd}
K(x,y)(\phi_R(x)-\phi_R(y))^2dxdy}{\displaystyle\int _{B_R}\phi_R^2(x)dx}= \frac 1{R^d} \int_{\rr^d}\int_{\rrd} K(x,y)\Big(\phi(\frac xR)-\phi(\frac yR)\Big)^2dxdy\\
&= R^d \int_{\rr^d}\int_{\rrd} K(Rx,Ry)\Big(\phi( x)-\phi(
y)\Big)^2dxdy.
\end{align*}
Using that $K(x,y)=\psi(y-a(x))+\psi(x-a(y))$ and that the right hand side in the
last term is symmetric we get
\begin{align*}
\lambda_1(B_R)&\leq 2 R^d \int _{\rr^d}\int_{\rrd} \psi(Ry-a(Rx))\Big(\phi( x)-\phi( y)\Big)^2dxdy\\
&=2\int _{\rr^d}\int _{\rr^d}\psi(z)\Big(\phi( x)-\phi( \frac{z+a(Rx)}R)\Big)^2dxdz\\
&\leq (2+\delta)\int _{\rr^d}\int _{\rr^d}\psi(z)\Big(\phi( x)-\phi( \frac{a(Rx)}R)\Big)^2dxdz\\
&\quad+
C(\delta)\int _{\rr^d}\int _{\rr^d}\psi(z)\Big(\phi( \frac{a(Rx)}R)-\phi( \frac{z+a(Rx)}R\Big)^2dxdz\\
&\leq (2+\delta)\int _{\rr^d}\psi(z)dz\int _{\rr^d}\Big(\phi( x)-\phi( {a(x)})\Big)^2dx\\
&\quad +\frac{C(\delta)}{R^2}\int _{|z|\leq 1}\psi(z)\int
_{\rr^d}\Big(\int _0^1\nabla
\phi({a(x)}+s\frac zR)\cdot zds\Big)^2dxdz.
\end{align*}
Observe that we have
\begin{align*}
\int _{|z|\leq 1}\psi(z)\int
_{\rr^d}\Big(\int _0^1\nabla &
\phi({a(x)}+ s\frac zR)\cdot zds\Big)^2dxdz
\\ & \leq
\int _{\rr^d}\psi(z)|z|^2 \int _{\rr^d}\int _0^1|\nabla \phi \big(a(x)+\frac{sz}R\big )|^2  dsdx dz\\
&\leq \int _{\rr^d}\psi(z)|z|^2 \int _0^1 \int _{\rr^d} |\nabla \phi \big(x+\frac{sz}R\big )|^2 |J_{a^{-1}}(x)| dxds dz\\
&\leq \int _{\rr^d}\psi(z)|z|^2 \int _0^1 \int _{|x|\leq 1} |\nabla \phi \big(x\big )|^2 |J_{a^{-1}}(x-\frac{sz}R)| dxds dz\\
&\leq \int _{\rr^d}\psi(z)|z|^2dz  \int _{\rr^d} |\nabla \phi (x )|^2dx \sup _{y\in B_{1+1/R}}|J_{a^{-1}}(y) | .
\end{align*}

Hence, we have
\begin{align*}
\lambda_1(B_R)
&\leq (2+\delta)\int _{\rr^d}\psi(z)dz\int _{\rr^d}\Big(\phi( x)-\phi( {a(x)})\Big)^2dx\\
&\quad +\frac{C(\delta)}{R^2}\int _{\rr^d}\psi(z)|z|^2dz  \int _{\rr^d} |\nabla \phi (x )|^2dx \sup _{y\in B_{1+1/R}}|J_{a^{-1}}(y) |,
\end{align*}
as we wanted to show. \end{proof}

Now we are ready to prove our general upper
bound.

\begin{proof}[Proof of Theorem \ref{teo.cota.por.arriba}]
Let us fix $\delta>0$ and a function  $\phi$ supported in the unit ball with
$\|\phi\|_{L^2(\rr^d)}=1$. We apply Lemma~\ref{lem.up} and let $R\to \infty$.
Then
$$\lambda_1(\rr^d)\leq (2+\delta)\int _{\rr^d}\psi(z)dz\int _{\rr^d}\Big(\phi( x)-\phi( {a(x)})\Big)^2dx$$
Letting $\delta\to 0$ we obtain the desired result.
\end{proof}


Now, we deal with the case in which $a$ is an invertible linear
map on $\rr^d$ of the form $a(x) = Ax$. To clarify the presentation  we first
treat the case of a diagonal matrix $A$. We then
 extend the result to the case of a
general matrix. The proof in the first case  is
simpler while the proof of the general case is more involved and
requires different techniques.

\begin{lemma}\label{minimizer}
Let $a(x)= Ax $  be an invertible linear map that in addition is
assumed to be diagonal, that is, $a(x) =
(\alpha_1x_1,\dots,\alpha_dx_d)^T$ with $\alpha_i \in \rr$. Then,
if we consider functions $\phi\in L^2 (\rr^d)$ supported in the
unit ball, we have
\begin{equation}
\label{liviu.ii}
 \inf _{\|\phi\|_{L^2(B_1)}=1}
\int_{\rr^d}\Big(\phi( x)- \phi( a( x))\Big)^2dx =(1-|\det
(A)|^{-1/2})^2.
\end{equation}
\end{lemma}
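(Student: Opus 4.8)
The plan is to expand the square and reduce everything to a single Cauchy--Schwarz inequality for the lower bound, with a matching self-similar construction for the upper bound. Writing $c=|\det(A)|^{-1/2}$ and using that $\phi$ is supported in $B_1$ with $\|\phi\|_{L^2(B_1)}=1$, I would first record the two ``diagonal'' terms: $\int_{\rrd}\phi^2(x)\,dx=1$, while the change of variables $y=a(x)=Ax$ gives $\int_{\rrd}\phi^2(a(x))\,dx=|\det(A)|^{-1}\int_{\rrd}\phi^2(y)\,dy=c^2$. Hence
$$\int_{\rrd}\big(\phi(x)-\phi(a(x))\big)^2dx=1+c^2-2\int_{\rrd}\phi(x)\phi(a(x))\,dx.$$
By Cauchy--Schwarz, $\int_{\rrd}\phi(x)\phi(a(x))\,dx\le\|\phi\|_{L^2}\,\|\phi(a(\cdot))\|_{L^2}=c$, so the quantity above is at least $1+c^2-2c=(1-c)^2$. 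This proves the inequality ``$\ge$'' in \eqref{liviu.ii} for an arbitrary invertible $A$; the diagonal hypothesis is not needed here.

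For the matching ``$\le$'', the idea is that equality in Cauchy--Schwarz would require $\phi(a(x))$ proportional to $\phi(x)$, i.e. $\phi(Ax)=c\,\phi(x)$, a functional equation solved by the self-similar profile $\phi(x)=\prod_i|x_i|^{-1/2}$; the point is to truncate it to $B_1$ and let the truncation exhaust the scales. Concretely, I would pass to logarithmic coordinates. Restricting first to the positive orthant (general signs of the $\alpha_i$ are absorbed by taking $\phi$ even in each variable, which multiplies both the numerator and $\|\phi\|^2$ by $2^d$ and leaves the quotient unchanged), I set $t_i=\log x_i$ and write $\phi(x)=\prod_i x_i^{-1/2}\,\eta(t_1,\dots,t_d)$. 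Since $\prod_i x_i^{-1}\,dx=dt$, one checks that $\int\phi^2\,dx=\int_{\rrd}\eta^2\,dt$ and, with $\beta=(\log|\alpha_1|,\dots,\log|\alpha_d|)$,
$$\int\big(\phi(x)-\phi(a(x))\big)^2dx=\int_{\rrd}\big(\eta(t)-c\,\eta(t+\beta)\big)^2dt,$$
while the constraint that $\phi$ be supported in $\{|x_i|<1\}$ becomes $\operatorname{supp}\eta\subset(-\infty,0)^d$. Thus the dilation has become a pure translation.

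In these coordinates the minimizing sequence is transparent: I would take $\eta_N=N^{-d/2}\chi_{(-N,0)^d}$, which has unit $L^2$ norm. A direct computation gives $\int_{\rrd}\eta_N(t)\eta_N(t+\beta)\,dt=N^{-d}\prod_i\bigl(N-|\beta_i|\bigr)\to1$ as $N\to\infty$, so the functional equals $1+c^2-2c\,N^{-d}\prod_i(N-|\beta_i|)\to(1-c)^2$. Translating back, $\eta_N$ is the function equal to $\prod_i|x_i|^{-1/2}$ on the ring $\{e^{-N}<|x_i|<1\}$, which lies in the unit cube. To place the test functions inside the unit ball $B_1$ as required, I would use that the functional $\int(\phi(x)-\phi(Ax))^2dx$ and the $L^2$ norm are both invariant under the rescaling $\phi\mapsto\lambda^{d/2}\phi(\lambda\,\cdot)$ (because $A$ commutes with dilations); contracting by $\lambda=\sqrt d$ carries the support into $B_1$ without changing the quotient. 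Combining the two bounds yields \eqref{liviu.ii}.

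The one genuine subtlety, and the reason the statement is an infimum rather than a minimum, is that the support constraint makes exact equality in Cauchy--Schwarz impossible: the proportionality $\phi(Ax)=c\phi(x)$ together with compact support forces $\phi$ to concentrate arbitrarily close to the origin and hence to vanish. The main effort is therefore the bookkeeping in the construction above---verifying the change-of-variables identity, the limit of the overlap term, and the rescaling into $B_1$---rather than any hard estimate; everything collapses to the elementary translation computation once the logarithmic substitution is in place.
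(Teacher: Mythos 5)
Your proof is correct. The lower bound is exactly the paper's argument: expand the square, use the change of variables to get $\int\phi^2(a(x))\,dx=|\det(A)|^{-1}$, and apply Cauchy--Schwarz to the cross term. For the matching upper bound both you and the paper build an approximately self-similar maximizing sequence by separation of variables, but the regularizations differ: the paper reduces to dimension one and takes the power-law profile $\phi_\sigma(x)=|x|^{-\sigma}\chi_{(0,\eps)}(x)$, letting the exponent $\sigma\to 1/2$ and computing the overlap ratio $\alpha^{-\sigma}\min\{1,1/\alpha\}^{1-2\sigma}/\alpha^{-1/2}\to 1$ directly; you instead keep the critical exponent $-1/2$, pass to logarithmic coordinates so that the dilation becomes a translation by $\beta=(\log|\alpha_1|,\dots,\log|\alpha_d|)$, and truncate the support to a large box, getting the overlap $N^{-d}\prod_i(N-|\beta_i|)\to 1$. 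Your version makes the mechanism (a translation acting on an ever-longer interval) more transparent and handles the signs and the final rescaling into $B_1$ explicitly, which the paper glosses over with its ``$\eps$ small enough'' remark; the paper's version avoids the change of coordinates entirely. One small caveat on your closing remark: when all $|\alpha_i|=1$ the infimum $0$ \emph{is} attained (by the normalized indicator of the ball), so the ``never a minimum'' heuristic only applies when some $|\alpha_i|\neq 1$ — but this is a side comment and does not affect the proof.
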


\begin{proof}
For any function $\phi$ as in the statement we have
\begin{align*}
\int_{\rr^d}\Big(\phi( x)-\phi( a( x))\Big)^2&=1+|\det(A)|^{-1}-2\int_{ \rr^d}\phi( x) \phi( a( x))\\
 &\geq 1+|\det(A)|^{-1}-2\left(\int_{\rr^d}\phi^2(x)dx\right)^{1/2}
 \left(\int_{\rr^d}\phi^2(a (x))dx\right)^{1/2}\\
 &=1+{|\det(A)|}^{-1}-2|\det(A)|^{-1/2}=(1-|\det(A)|^{-1/2})^2.
\end{align*}

In order to prove \eqref{liviu.ii} we need to show
the existence of a sequence of functions $\phi$ as in the statement
such that
$$
\dfrac{\displaystyle
\int_{ \rr^d}\phi( x)\phi (a( x))dx }{\displaystyle |\det(A)|^{-1/2}\int_{\rr^d}\phi^2(x)dx
}
\rightarrow 1.
$$
Choosing $\phi$ of the form (we use a standard separation of
variables here)
$$\phi(x)=\prod_{i=1}^d \phi_i(x_i){\chi}_{B_\eps}(x),\quad x=(x_1,\dots,x_d),$$
with $\eps$ small enough such that $\phi$ to be supported in the
unit ball we reduce the problem to the one dimensional case:
$a(x)=\alpha x$ and construct a sequence of functions
$\phi_\sigma$ supported in $[-\eps,\eps]$ such that
$$\frac{\displaystyle
\int_{\rr}\phi_\sigma( x)\phi_\sigma( a (x)))dx}{\displaystyle \alpha^{-1/2}\int_{\rr}\phi_\sigma^2(x)dx}
\rightarrow 1.$$

We choose
$$\phi_\sigma (x)=\frac 1{|x|^{\sigma}}{\chi}_{(0,\eps)}(x), \qquad \mbox{ with }\sigma<1/2.$$
Then
$$\int _{\rr}\phi_\sigma^2(x)dx= \int _0^\eps\frac 1{|x|^{2\sigma}}=\frac {\eps^{1-2\sigma}}{1-2\sigma}$$
and
$$
\begin{array}{l}
\displaystyle \int_{ \rr}\phi_\sigma( x)\phi_\sigma( a (x))dx  =
\int _0^{\min\{\eps,\eps/\alpha\}}\frac 1{|x|^{\sigma}}\frac 1{|\alpha
x|^{\sigma}} \\
\displaystyle \qquad \qquad = \alpha^{-\sigma}\int
_0^{\min\{\eps,\eps/\alpha\}}\frac 1{|x|^{2\sigma}}=
\frac{\alpha^{-\sigma}\min\{\eps,\eps/\alpha\}^{1-2\sigma}}{1-2\sigma}.
\end{array}
$$
Thus
$$\displaystyle\frac{\displaystyle
\int_{ \rr}\phi_\sigma( x)\phi_\sigma( a (x)))dx}{\displaystyle\alpha^{-1/2}\int_{\rr}\phi_\sigma^2(x)dx}=
\frac{\alpha^{-\sigma}\min\{\eps,\eps/\alpha\}^{1-2\sigma}}{\alpha^{-1/2}\eps^{1-2\sigma}}\rightarrow 1, \qquad
\mbox{as }\sigma\rightarrow 1/2.$$
This ends the proof.
\end{proof}

We proceed now to prove our result concerning linear functions $a$
when $A$ is diagonal.

\begin{theorem} \label{teo.lineal.ii}
Let $a(x)= Ax $  be an invertible linear map that in addition is
assumed to be diagonal, that is, $a(x) =
(\alpha_1x_1,\dots,\alpha_dx_d)^T$ with $\alpha_i \in \rr$. Then
$$\lambda_1 (\rr^d) =
\lim_{R\rightarrow \infty}\lambda_1(B_R)=2(1-|\det(A)|^{-1/2})^2\int _{\rr^d}\psi(z)dz.$$
\end{theorem}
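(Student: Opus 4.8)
The plan is to pin down the exact value by squeezing $\lambda_1(\rr^d)$ between the lower bound of Theorem~\ref{teo.cota.por.abajo} and the upper bound of Theorem~\ref{teo.cota.por.arriba}, both of which I expect to collapse to one and the same quantity once the diagonal structure is used. The key preliminary observation is that for $a(x)=Ax$ with $A$ diagonal the inverse $a^{-1}(x)=A^{-1}x$ has a \emph{constant} Jacobian $J_{a^{-1}}(x)\equiv\det(A)^{-1}$, so that $|J_{a^{-1}}(x)|=|\det(A)|^{-1}$ for every $x\in\rr^d$. In particular the two quantities $M=\sup_x|J_{a^{-1}}|$ and $m=\inf_x|J_{a^{-1}}|$ entering Theorem~\ref{teo.cota.por.abajo} both equal $|\det(A)|^{-1}$.

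For the upper bound, a linear map is homogeneous of degree one, so Theorem~\ref{teo.cota.por.arriba} applies and gives
\[
\lambda_1(\rr^d)\le 2\Big(\int_{\rr^d}\psi(z)\,dz\Big)\inf_{\|\phi\|_{L^2(B_1)}=1}\int_{\rr^d}\big(\phi(x)-\phi(a(x))\big)^2\,dx .
\]
The infimum on the right is exactly the one evaluated in Lemma~\ref{minimizer}, namely $(1-|\det(A)|^{-1/2})^2$. Substituting yields $\lambda_1(\rr^d)\le 2\,(1-|\det(A)|^{-1/2})^2\int_{\rr^d}\psi$.

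For the lower bound I would treat the two regimes of Theorem~\ref{teo.cota.por.abajo} separately. If $|\det(A)|>1$ then $M=|\det(A)|^{-1}<1$ and the first alternative gives $\lambda_1(\rr^d)\ge 2(1-M^{1/2})^2\int_{\rr^d}\psi=2(1-|\det(A)|^{-1/2})^2\int_{\rr^d}\psi$; if $|\det(A)|<1$ then $m=|\det(A)|^{-1}>1$ and the second alternative gives $\lambda_1(\rr^d)\ge 2(m^{1/2}-1)^2\int_{\rr^d}\psi$, which is the same number because $(m^{1/2}-1)^2=(1-|\det(A)|^{-1/2})^2$. Matching this with the upper bound forces equality, and the identity with $\lim_{R\to\infty}\lambda_1(B_R)$ is precisely Lemma~\ref{asimptoticlimit}.

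Since all the analytic work is carried by the auxiliary results already proved, the argument here is essentially assembly and I do not anticipate a genuine obstacle. The one point that deserves explicit care is reconciling the two superficially different lower bounds, together with the borderline case $|\det(A)|=1$, where the hypotheses of Theorem~\ref{teo.cota.por.abajo} fail but the target value $2(1-|\det(A)|^{-1/2})^2\int_{\rr^d}\psi$ vanishes so that the bound $\lambda_1\ge 0$ is automatic. For this reason I would spell out the two determinant regimes rather than dispose of them by a passing appeal to symmetry.
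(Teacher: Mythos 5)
Your proposal is correct and follows essentially the same route as the paper: the upper bound comes from Theorem~\ref{teo.cota.por.arriba} together with the explicit evaluation of the infimum in Lemma~\ref{minimizer}, and the lower bound from Theorem~\ref{teo.cota.por.abajo} with $M=m=|\det(A)|^{-1}$, exactly as in the paper's proof. Your explicit handling of the borderline case $|\det(A)|=1$ (where the hypotheses of Theorem~\ref{teo.cota.por.abajo} fail but the claimed value is zero) is a small point the paper leaves implicit.
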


\begin{proof}
Using the results of Theorem \ref{teo.cota.por.arriba} and Lemma \ref{minimizer}
(here we are using that $A$ is diagonal) we obtain that
 $$\lim_{R\rightarrow \infty}\lambda_1(B_R)\leq 2(1-|\det(A)|^{-1/2})^2\int _{\rr^d}\psi(z)dz.$$
On the other hand Theorem \ref{teo.cota.por.abajo} gives us that
$$\lim_{R\rightarrow \infty}\lambda_1(B_R)=\lambda_1(\rr^d)\geq 2(1-|\det(A)|^{-1/2})^2\int _{\rr^d}\psi(z)dz.$$
Thus we conclude that
$$\lim_{R\rightarrow \infty}\lambda_1(B_R)=2(1-|\det(A)|^{-1/2})^2\int _{\rr^d}\psi(z)dz.$$
and the proof is finished.
\end{proof}

Now our task is to extend the result, using different arguments to a
general lineal invertible map $a(x) =Ax$. In this case we use the
Jordan decomposition of $A$.

Recall that a linear map $a: \mathbb{R}^d \to \mathbb{R}^d$, $a(x)=Ax$ is
called expansive if the absolute value of the (complex)
eigenvalues of $A$ are bigger than one.
\begin{lemma}\label{minimizerexpansive} Let $a: \mathbb{R}^d \to \mathbb{R}^d$
be an invertible linear map.  If $a$ or $a^{-1}$ is expansive then
for functions $\phi\in L^2 (\rr^d)$ supported in the unit ball with
$ \|\phi\|_{L^2(B_1)}=1$ the following holds:
$$\sup _{\phi} \int_{\rr^d}\phi( x) \phi( a( x))dx
= |\det (A)|^{-1/2}.$$
Moreover, the supremum is
not attained.
\end{lemma}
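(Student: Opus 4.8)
The plan is to prove the two assertions—the value of the supremum and its non-attainment—by matching a soft upper bound coming from Cauchy--Schwarz with an explicit maximizing sequence built from the ``critical homogeneous'' profile of the dilation $a$, and to read off non-attainment from the equality case of Cauchy--Schwarz.

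First I would dispose of the upper bound, which in fact needs no expansivity. For any admissible $\phi$, Cauchy--Schwarz gives
\[
\int_{\rrd}\phi(x)\,\phi(a(x))\,dx \le \Big(\int_{\rrd}\phi^2(x)\,dx\Big)^{1/2}\Big(\int_{\rrd}\phi^2(Ax)\,dx\Big)^{1/2},
\]
and the change of variables $y=Ax$ yields $\int_{\rrd}\phi^2(Ax)\,dx=|\det A|^{-1}\int_{\rrd}\phi^2$, so the right-hand side equals $|\det(A)|^{-1/2}$. Hence $\sup_\phi\int\phi\,\phi\circ a\le|\det(A)|^{-1/2}$ for every invertible $A$.

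For the matching lower bound I would construct a near-maximizing sequence. The equality condition in Cauchy--Schwarz forces $\phi\circ a = c\,\phi$ with necessarily $c=|\det(A)|^{-1/2}$, i.e.\ a function homogeneous of the critical degree for the $A$-dilation; exactly as the borderline profile $|x|^{-1/2}$ in the one-dimensional model of Lemma~\ref{minimizer}, this function just fails to lie in $L^2$, so I approximate it by truncation to finitely many shells. Using the hypothesis, let $B$ be whichever of $A^{\pm1}$ is a contraction, so that $B^n\to 0$. Fix a bounded annular ``shell'' $F$ that tiles $\rrd\setminus\{0\}$ under $\{A^nF\}_{n\in\zz}$ (a fundamental domain for the $\zz$-action generated by $A$); choosing $F$ deep enough, the shells $B^nF$ with $n\ge 1$ all lie in $B_1$ and accumulate at the origin. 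On a chain of $N$ consecutive such shells I define $\phi$ by the exact relation $\phi(Ax)=|\det(A)|^{-1/2}\phi(x)$, i.e.\ by pulling back a fixed profile $g$ on $F$ with the appropriate weights. A direct computation then gives $\|\phi\|_{L^2}^2=N\|g\|_{L^2(F)}^2$ and $\int_{\rrd}\phi\,\phi\circ a\,dx=(N-1)\,|\det(A)|^{-1/2}\|g\|_{L^2(F)}^2$, so the normalized ratio is $\tfrac{N-1}{N}\,|\det(A)|^{-1/2}\to|\det(A)|^{-1/2}$ as $N\to\infty$. Together with the upper bound this gives the claimed supremum. For non-attainment, if some admissible $\phi$ realized the value then equality in Cauchy--Schwarz forces $\phi(a(x))=|\det(A)|^{-1/2}\phi(x)$ a.e.; iterating gives $\phi(x)=|\det(A)|^{\pm n/2}\phi(A^{\pm n}x)$, and since the expansive one of $a,a^{-1}$ sends $\|A^{\pm n}x\|\to\infty$ for every $x\ne 0$, the point $A^{\pm n}x$ eventually leaves $B_1$, so $\phi(A^{\pm n}x)=0$ and hence $\phi\equiv 0$, contradicting $\|\phi\|_{L^2}=1$.

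The main obstacle is the construction of the shell $F$ and the tiling $\bigsqcup_n A^nF$ in full generality, where $A$ may be non-diagonalizable or have complex eigenvalues and there is no radial homogeneity to exploit, in contrast to the diagonal situation of Lemma~\ref{minimizer}. I expect this to be handled through the Jordan decomposition of $A$: it produces an adapted norm in which $A$ acts as a genuine dilation (with $\|Ax\|\ge\gamma\|x\|$, $\gamma>1$, in the expansive case), and this is exactly what guarantees both the proper discontinuity of the $\zz$-action—hence the existence of the annular fundamental domain—and the containment and accumulation of the shells inside $B_1$. Once the adapted structure is in place, verifying that the weighted pullback $\phi$ is genuinely in $L^2$ with the stated norm identities is a routine change-of-variables computation.
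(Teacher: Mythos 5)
Your proposal is correct and follows essentially the same route as the paper: the Cauchy--Schwarz upper bound with its equality case giving non-attainment, and a near-maximizing sequence supported on the disjoint shells $a^{-l}(F)\setminus a^{-l-1}(F)$ obtained by iterating the contracting one of $a^{\pm 1}$ on a neighbourhood of the origin, with the non-expansive case reduced to the expansive one. The only (cosmetic) difference is the choice of weights on the shells: you truncate the critical profile to $N$ shells and let $N\to\infty$, while the paper uses an infinite chain with subcritical geometric weights $\sigma^{j}$ and lets $\sigma\uparrow|\det (A)|^{1/2}$.
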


\begin{proof}
First, given $\phi$ as in the statement, we observe that
\begin{equation}\label{kkl}
 \int_{\rr^d}\phi( x) \phi( a( x))dx \leq \left( \int_{\rr^d}\phi^2( x) dx
 \right)^{1/2} \left( \int_{\rr^d}\phi^2( a(x)) dx
 \right)^{1/2}.
\end{equation}
Hence
$$\sup_{\phi}  \int_{\rr^d}\phi( x) \phi( a( x))dx
\leq  |\det (A)|^{-1/2}.$$
Observe that in \eqref{kkl} we cannot have equality since in this case $\phi(a
(x))=\mu\phi(x)$ $a.e$ for some constant $\mu$. Since $a$ is expansive this implies that
$\phi$ should vanish identically.

Now we want to obtain the reverse inequality. Let us assume that $a$
is expansive. So, there exists $B \subset \mathbb{R}^d$ a ball with
center the origin such that $a^{-j}(B) \subset B_1$, $\forall j \in
\{ 0,1,\dots, \}$. Take the following sets
\begin{equation*}
\displaystyle F= \bigcup_{j=0}^{\infty}a^{-j}(B), \qquad E_l= a^{-l}(F) \setminus a^{-l-1}(F),
\quad \textrm{for } l \in \{0,1, \dots \}
\end{equation*}
and
\begin{equation*}
 E=  \bigcup_{j=0}E_j.
\end{equation*}
Observe that given $l \in \{0,1, \dots \}$ we have $\mid E_l \mid_d
>0$. Here and in what follows we denote by $\mid \cdot \mid_d$ the
Lebesgue measure of a set in $\rr^d$.

Since $|\det \ A| >1$, then
\begin{eqnarray*}
|a^{-l}(F)|_d =|a(a^{-l-1}(F))|_d =  |\det (a)| |a^{-l-1}(F)|_d  >
|a^{-l-1}(F)|_d.
\end{eqnarray*}
Next, let us observe that
\begin{equation} \label{Fintersection}
E_j \cap E_l= \emptyset \quad \textrm{if $j,l \in \{0,1,\dots\}$
and $j \neq  l$}.
\end{equation}
Also, since $F\supset a^{-1}(F)\supset a^{-2}(F)\supset \dots $
we have
$$|E_j|_d=|a^{-j}(F)|_d-|a^{-j-1}(F)|_d=|\det (A)|^{-j}(|F|_d-|a^{-1}(F)|_d)= |\det (A)|^{-j}|E_0|_d. $$

For any $0< \sigma<|\det (A)|^{1/2}$ we now choose
$$\phi_\sigma (x)= \sum_{j=0}^{\infty} \sigma^j \chi_{E_j}(x).$$
These functions are supported in the unit ball and belong to
$L^2(\rr^d)$, in fact,
$$
\|\phi_\sigma \|^2_{L^2(\mathbb{R}^d)}= \sum_{j=0}^{\infty} \sigma^{2j} |E_j|_d =
|E_0|_d \sum_{j=0}^{\infty} \sigma^{2j} |\det (A)|^{-j} (<
\infty).$$

On the other hand,
\begin{eqnarray*}
\int_{ \rr}\phi_\sigma( x)\phi_\sigma( a (x))dx &=&
\sum_{j=1}^{\infty} \sigma^{j-1} \sigma^j |E_j|_d = \sum_{j=1}^{\infty} \sigma^{j-1}
\sigma^j |\det (A)|^{-j} |E_0|_d \\
&=& \sigma |\det  (A)|^{-1}  |E_0|_d \sum_{j=1}^{\infty}
\sigma^{2(j-1)} |\det (A)|^{-j+1} \\ &= & \sigma |\det (A)|^{-1}
\|\phi_\sigma \|^2_{L^2(\mathbb{R}^d)}.
\end{eqnarray*}
Thus
$$\frac{\displaystyle
\int_{ \rr}\phi( x)\phi( a (x)))dx}
{\displaystyle |\det (A)|^{-1/2}\int_{\rr}\phi^2(x)dx}= \sigma
|\det (A)|^{-1/2}
\rightarrow 1, \quad \text{as}\quad  \sigma\rightarrow (|\det (A)|^{1/2})^{-},$$
which proves Lemma \ref{minimizerexpansive} in the case of an expansive function.

Assume now that $a^{-1}$ is expansive. Let  $\phi $ as in the
statement, then after the change of variable $a(x)=y$ we have
$$ \int_{\rr^d}\phi( x)\phi( a( x))dx =
|\det (A) |^{-1}\int_{\rr^d}\phi( x)\phi( a^{-1}( x))dx.
$$
 Hence, the proof finishes using the previous expansive case.
\end{proof}

\begin{lemma}\label{minimizeruni} Let $a: \mathbb{R}^d \to \mathbb{R}^d$, $a(x)=Ax$ be such that $A$ is
 diagonalizable  with  all of its complex eigenvalues having
the  absolute value equal to one. For functions $\phi\in L^2
(\rr^d)$ supported in the unit ball with $
\|\phi\|_{L^2(B_1)}=1$ the following holds
$$\sup_{\phi}  \int_{\rr^d}\phi( x)\phi( a( x))dx
= \max_{\phi}  \int_{\rr^d}\phi( x)\phi( a( x))dx = 1.$$
\end{lemma}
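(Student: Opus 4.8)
The plan is to establish the two inequalities that together pin the value at $1$: first that $\int_{\rrd}\phi(x)\phi(a(x))\,dx\le 1$ for every admissible $\phi$, and then that this bound is attained, so that the supremum is in fact a maximum. For the upper bound I would argue exactly as in Lemma \ref{minimizerexpansive}. Since $A$ is diagonalizable with all its eigenvalues of modulus one, $|\det(A)|=\prod_i|\lambda_i|=1$. Hence, after the change of variables $y=a(x)$,
$$
\int_{\rrd}\phi^2(a(x))\,dx=|\det(A)|^{-1}\int_{\rrd}\phi^2(y)\,dy=\int_{\rrd}\phi^2(x)\,dx=1 ,
$$
and Cauchy--Schwarz gives
$$
\int_{\rrd}\phi(x)\phi(a(x))\,dx\le\Big(\int_{\rrd}\phi^2(x)\,dx\Big)^{1/2}\Big(\int_{\rrd}\phi^2(a(x))\,dx\Big)^{1/2}=1 .
$$
Thus $\sup_\phi\int_{\rrd}\phi(x)\phi(a(x))\,dx\le 1$.

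To see that the value $1$ is achieved, I would first note that equality in Cauchy--Schwarz, combined with $\int\phi^2(a(x))\,dx=\int\phi^2(x)\,dx$, forces $\phi(a(x))=\phi(x)$ almost everywhere. So it suffices to exhibit a nonzero, normalized $\phi\in L^2(\rrd)$ supported in $B_1$ with $\phi\circ a=\phi$. The structural fact I would invoke here is the crux of the argument: a real matrix $A$ that is diagonalizable over $\cn$ with all eigenvalues of modulus one preserves a positive definite quadratic form, equivalently it is conjugate over $\rr$ to an orthogonal matrix. Concretely, $A^n=SD^nS^{-1}$ with $|D^n|$ bounded, so the orbit $\{A^n:n\in\zz\}$ is bounded; its closure $G$ in $GL(d,\rr)$ is a compact abelian group, and averaging against its Haar probability measure $\mu$ gives $P=\int_G g^{T}g\,d\mu(g)$, a symmetric positive definite matrix satisfying $A^{T}PA=P$ by invariance of $\mu$.

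With $Q(x)=x^{T}Px$ in hand, every sublevel set $E_r=\{x:Q(x)\le r\}$ is an ellipsoid invariant under $a$, since $Q(a(x))=x^{T}A^{T}PA\,x=Q(x)$, and for $r$ small enough $E_r\subset B_1$. Taking $\phi=|E_r|_d^{-1/2}\chi_{E_r}$ then produces a unit-norm function supported in the unit ball with $\phi\circ a=\phi$, so that $\int_{\rrd}\phi(x)\phi(a(x))\,dx=\int_{\rrd}\phi^2(x)\,dx=1$, proving the supremum is a maximum equal to $1$. The hard part is precisely the structural step that yields the $a$-invariant positive definite form; this is where the hypotheses (diagonalizability together with unit-modulus eigenvalues) are indispensable, and where the present lemma diverges sharply from Lemma \ref{minimizerexpansive}: in the expansive case no nonzero invariant function supported in $B_1$ exists and the supremum is not attained, whereas here the invariant ellipsoids supply one directly.
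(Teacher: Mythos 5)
Your proof is correct, but it takes a genuinely different --- and in fact more careful --- route than the paper's. The paper's proof is a one-liner: it takes $\phi=|B_1|^{-1/2}_d\chi_{B_1}$ and asserts $\phi(a(x))=\phi(x)$, i.e.\ that $a$ maps the unit ball onto itself. That identity holds only when $A$ is orthogonal, whereas the hypothesis (diagonalizable with all eigenvalues of modulus one) also allows matrices such as $A=\left(\begin{smallmatrix}0&2\\-1/2&0\end{smallmatrix}\right)$, which has eigenvalues $\pm i$ but does not preserve $B_1$; so the paper's test function does not literally work in the stated generality. Your argument repairs exactly this point: you invoke the structural fact that such an $A$ is conjugate to an orthogonal matrix and hence preserves a positive definite quadratic form $Q(x)=x^TPx$. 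Your Haar-averaging construction of $P=\int_G g^Tg\,d\mu(g)$ is sound --- the orbit closure is indeed a compact abelian subgroup of $GL(d,\rr)$ because $\|A^{\pm n}\|$ stays bounded and $\det=\pm1$ rules out degeneration --- although one can reach $P$ more directly by setting $P=\mathrm{Re}\big((S^{-1})^{*}S^{-1}\big)$ from a diagonalization $A=SDS^{-1}$ with $D$ unitary. The normalized indicator of a small sublevel ellipsoid $\{Q\le r\}\subset B_1$ is then a genuine fixed point of composition with $a$, which attains the value $1$; combined with your Cauchy--Schwarz upper bound (which uses only $|\det A|=1$), this proves the lemma as stated, including that the supremum is a maximum. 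What your approach buys is a proof valid under the actual hypotheses; what the paper's buys is brevity, and its gap is harmless for the main theorem, since the proof of Theorem~\ref{main.result} invokes Lemmas~\ref{minimizerexpansive} and~\ref{minimizeruni.44} rather than Lemma~\ref{minimizeruni}.
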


\begin{proof}
Take $\phi= |B_1|^{-1/2}_d\chi_{B_1}$ where $\chi_{B_1}$ is the
characteristic function of the ball with center the origin and
radius $1$. Since $\phi(a(x))= \chi_{B_1}(x)$, then the assertion
follows.
\end{proof}

\begin{lemma}\label{minimizeruni.44} Let $a: \mathbb{R}^d \to \mathbb{R}^d$, $a(x)=Ax$
be an invertible linear map such that the corresponding matrix
associated to the canonical basis is given by
\begin{equation} \label{realJordan1}
 J_k(\lambda)= \left (
\begin{array}{ccccccccc}
  \lambda & 1       &    &       \\
          &  \ddots &  & 1  \\
          &         &         & \lambda
\end{array}  \right ),
\end{equation}
or
\begin{equation} \label{complexJordan1}
 \widetilde{J}_k(\theta)= \left (
\begin{array}{ccccccccc}
  \mathbf{M} & \mathbf{I}       &    &       \\
          &  \ddots &  & \mathbf{I}  \\
          &         &         & \mathbf{M}
\end{array}  \right ),
\end{equation}
 where $\lambda\in \{\pm 1\}$, $\theta \in \mathbb{R}$, $\mathbf{M}
= \left (
\begin{array}{ccccccccc}
  \cos \theta & \sin \theta       \\
    -\sin \theta      &  \cos \theta
\end{array}  \right )
$ and $\mathbf{I} = \left (
\begin{array}{ccccccccc}
  1 & 0       \\
    0      &  1
\end{array}  \right )$.
 Then, if we consider functions $\phi\in L^2 (\rr^d)$ supported in
the unit ball with $\|\phi\|_{L^2(B_1)}=1$, we get
$$\sup_\phi  \int_{\rr^d}\phi( x)\phi( a( x))dx
= 1.$$
\end{lemma}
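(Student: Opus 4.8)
The plan is to prove the upper bound $\sup_\phi\int_{\rr^d}\phi(x)\phi(a(x))\,dx\le 1$ exactly as in Lemma~\ref{minimizerexpansive}, and then to establish sharpness in a unified way for both matrix types by exhibiting a family of admissible test functions along which the quotient tends to $1$. For the upper bound I first observe that in each case $|\det(A)|=1$: for \eqref{realJordan1} one has $\det J_k(\lambda)=\lambda^k=\pm1$, while for \eqref{complexJordan1} every diagonal block $\mathbf M$ is a rotation with $\det\mathbf M=1$. Consequently $\int_{\rr^d}\phi^2(a(x))\,dx=|\det(A)|^{-1}=1$, and Cauchy--Schwarz as in \eqref{kkl} gives
\[
\int_{\rr^d}\phi(x)\phi(a(x))\,dx\le\Big(\int_{\rr^d}\phi^2(x)\,dx\Big)^{1/2}\Big(\int_{\rr^d}\phi^2(a(x))\,dx\Big)^{1/2}=1 .
\]

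For the reverse bound I would use that, since $|\det(A)|=1$,
\[
\int_{\rr^d}\big(\phi(x)-\phi(a(x))\big)^2\,dx=2-2\int_{\rr^d}\phi(x)\phi(a(x))\,dx ,
\]
so it suffices to produce admissible $\phi$ with $\int(\phi-\phi\circ a)^2$ arbitrarily small. The key device is an anisotropic dilation adapted to the Jordan block. Writing $A=A_0+N$, where $A_0$ is the (block) diagonal part and $N$ the nilpotent superdiagonal part, I set $D_s=\operatorname{diag}(s,s^2,\dots,s^k)$ in the real case, and the analogous block dilation $\operatorname{diag}(s\,\mathbf I,\dots,s^k\,\mathbf I)$ in the complex case. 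A direct computation shows that $D_s$ commutes with $A_0$ and rescales the superdiagonal by the factor $s$, so that
\[
B_s:=D_s^{-1}AD_s=A_0+sN\longrightarrow A_0\qquad(s\to0^+),
\]
and $A_0$ is orthogonal in all cases ($A_0=I$, $A_0=-I$, or the block rotation $\operatorname{diag}(\mathbf M,\dots,\mathbf M)$).

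Next I fix a continuous, compactly supported, radial function $g\not\equiv0$ and put $\phi_s(x)=g(D_s^{-1}x)$, normalized in $L^2$. Since $\|D_s\|=s\to0$, the support of $\phi_s$ shrinks into $B_1$ for $s$ small, so $\phi_s$ is admissible. The change of variables $y=D_s^{-1}x$ makes the Jacobian $|\det D_s|$ cancel in the quotient, leaving
\[
\frac{\int_{\rr^d}\big(\phi_s(x)-\phi_s(a(x))\big)^2\,dx}{\int_{\rr^d}\phi_s^2(x)\,dx}=\frac{\int_{\rr^d}\big(g(y)-g(B_sy)\big)^2\,dy}{\int_{\rr^d}g^2(y)\,dy}.
\]
Because $g$ is radial and $A_0$ is orthogonal, $g\circ A_0=g$; since $B_s\to A_0$, $g$ is continuous, and the integrands stay supported in a fixed compact set, dominated convergence gives $\int(g-g\circ B_s)^2\to\int(g-g\circ A_0)^2=0$. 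Hence $\int_{\rr^d}\phi_s(x)\phi_s(a(x))\,dx\to1$, which together with the upper bound proves $\sup_\phi=1$.

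The hard part will be the algebraic verification that the anisotropic dilation conjugates $A$ to $A_0+sN$ with an \emph{orthogonal} limit $A_0$ — in particular that $D_s$ commutes with the rotation blocks in the complex case — together with checking that radiality of $g$ makes the limiting defect vanish while the support constraint $\operatorname{supp}\phi_s\subset B_1$ is preserved. I note that, in contrast to Lemma~\ref{minimizeruni}, the supremum here is not attained: the shear $N\neq0$ prevents $\phi$ and $\phi\circ a$ from being proportional, which is why the statement asserts only a supremum.
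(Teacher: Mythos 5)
Your proof is correct, but it takes a genuinely different route from the paper's. The paper establishes the same Cauchy--Schwarz upper bound and then, for the lower bound, chains small balls along an orbit: it picks a point $\mathbf{p}$ (resp.\ $\mathbf{q}$), verifies that the images $a^j(2^{-k}B)$, $j=0,\dots,k$, are pairwise disjoint and contained in $B_1$, and takes $\phi_k=\sum_{j=0}^k\chi_{a^j(2^{-k}B)}$, for which the overlap integral is exactly $\tfrac{k}{k+1}$ of the squared norm --- the same device as in Lemma \ref{minimizerexpansive}, with the technical work concentrated in the growth estimates $|a^j(\mathbf{p})|=O(j)$ and the separation of the balls. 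You instead conjugate by the anisotropic dilation $D_s=\operatorname{diag}(s,s^2,\dots,s^k)$ (block version in the complex case); the identities $D_s^{-1}ND_s=sN$ and $D_sA_0=A_0D_s$ are correct, so $B_s=A_0+sN\to A_0$ with $A_0$ orthogonal, and a normalized radial bump $g(D_s^{-1}\cdot)$ finishes the argument by dominated convergence (legitimate, since $B_s^{-1}\to A_0^{-1}$ keeps all integrands supported in a fixed compact set, and $g\circ A_0=g$ by radiality). Your route is more uniform --- it handles the real and complex blocks, and all values of $\theta$, by one computation --- and it isolates the real reason the answer is $1$: the nilpotent shear can be rescaled away while the orthogonal part is harmless. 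The paper's construction needs no smoothness or symmetry of the test functions and reuses the orbit-chaining template of the expansive case, but its Case II separation argument is more delicate. One small correction to your closing remark: the claim that the supremum is not attained should be restricted to $N\neq 0$; for a $1\times1$ block or a single rotation block the normalized characteristic function of $B_1$ attains the value $1$ (cf.\ Lemma \ref{minimizeruni}). This does not affect the statement, which only asserts the value of the supremum.
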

\begin{proof} {\bf Case I.}
Assume that the corresponding matrix of the linear map $a$
associated to the canonical basis is given by (\ref{realJordan1}).
Given $j \in
\mathbb{N} $, $a^{j}(\mathbf{p})^t= (\lambda^j + j \lambda^{j-1},
\lambda^j, 0, \dots, 0 )^t$ where $\mathbf{p}=(1, 1, 0, \dots,
0)\in \rr^d$. Observe that $a^j(\mathbf{p}) \neq a^l(\mathbf{p})$
if $l,j \in \mathbb{N}$ and $j \neq l$. Indeed $\| a^j(\mathbf{p})
- a^l(\mathbf{p}) \| \geq 1$ if $j \neq l$. Thus,
$a^j(B_{1/4}(\mathbf{p})) \cap a^l(B_{1/4}(\mathbf{p})) =
\emptyset$  if $j \neq l$, where $B_{1/4}(\mathbf{p})$ is the ball
with center the point $\mathbf{p}$ and radius $1/4$.

Given $k \in \mathbb{N}$, $k \geq 5$, set  the function
$$\phi_k(x)=
\sum_{j=0}^k \chi_{a^j(2^{-k}B_{1/4}(\mathbf{p}))}(x).
$$
Observe that the function $\phi_k$ is supported in the unit ball. If
$x$ is in the support of $\phi_k$ then there exists $j \in
\{0,1,\dots, k\}$ such that $| x - a^j(\mathbf{p}) | \leq 2^{-k-2}$
and we have
\begin{eqnarray*}
 | x  | & \leq& | x - a^j(\mathbf{p}) | + |  a^j(\mathbf{p}) | \\
 & \leq & 2^{-k-2} + 2^{-k}((1 + k )^2 + 1)^{1/2}
 \leq 2^{-k-2} + 2^{-k} 3k \leq  2^{-k+1} 3k < 1.
\end{eqnarray*}
Further,
$$
\|\phi_k\|^2_{L^2(\mathbb{R}^d)}= \sum_{j=0}^{k}  |a^j(2^{-k}B_{1/4}(\mathbf{p}))|_d
= 2^{-kd}(k+1)|B_{1/4}(\mathbf{p})|_d.
$$
On the other hand,
\begin{eqnarray*}
\int_{ \rr}\phi_k( x)\phi_k( a (x))dx &=&  \sum_{j=1}^{k}
|a^j(2^{-k}B_{1/4}(\mathbf{p}))|_d = 2^{-kd}(k)|B_{1/4}(\mathbf{p})|_d
\end{eqnarray*}
Thus
$$\frac{\displaystyle \int _{ \rr}\phi_k( x)\phi_k( a (x)))dx}{\displaystyle |\det (A)|^{-1/2}\int_{\rr}
\phi_k^2(x)dx}= \frac{k}{k+1}
\rightarrow 1, \quad k \rightarrow \infty.$$

Having in mind that $|\det (A)|=1$, that $\phi$ satisfies the
hypotheses in the statement and using H\"{o}lder's inequality we
obtain that
$$\sup_\phi   \int_{\rr^d}\phi( x)\phi( a( x))dx
\leq 1= |\det (A)|^{-1/2}.$$ Hence, the conclussion follows.

{\bf Case II.} Assume that   the corresponding matrix of $A$ in the
canonical basis is given by (\ref{complexJordan1}).
For any $j \in \mathbb{N} $ we set
\begin{equation}
 a^{j}(\mathbf{q}) = \left (
\begin{array}{ccccccccc}
 \cos (j\theta) +\sin (j\theta) + (j-1) \cos ((j-1)\theta) +(j-1)\sin ((j-1)\theta)
 \\ \cos (j\theta) -\sin (j\theta) + (j-1) \cos ((j-1)\theta) -(j-1)\sin ((j-1)\theta)      \\
         \cos (j\theta) +\sin (j\theta)  \\
         \cos (j\theta) -\sin (j\theta) \\
         0 \\
         \cdots \\
         0
\end{array}  \right ).
\end{equation}
 Observe that for $\mathbf{q} =(1,1,1,1,0\dots 0)$, we have
 $a^{j}(\mathbf{q}) \neq \mathbf{q}$ if $j \in \{ 0, \dots, k \}$,
where $k$ is a no negative integer number. So $a^j(\mathbf{q}) \neq
a^l(\mathbf{q})$ if $l,j \in  \{ 0, \dots, k \}$ and $j \neq l$.
Thus,  by continuity of the linear map $a$,  there exists $B \subset
\rr^d$ a ball with the center at  the point $\mathbf{q}$ and radius
less or equal to $1$ such that $a^j(B) \cap a^l(B) = \emptyset$  if
$j,l \in \{ 0,1, \dots, k \}$, $j \neq l$.

Given $k \in \mathbb{N}$, $k \geq 7$, we  set  the function
$$\phi_k(x)=
\sum_{j=0}^k \chi_{a^j(2^{-k}B)}(x).$$ Observe that the function $\phi_k$
is supported in the unit ball. If $x$ is in the support of
$\phi_k$ then there exists $j \in \{0,1,\dots, k\}$ such that $|
x - a^j(\mathbf{q}) | \leq 2{-k}$ and we have
\begin{eqnarray*}
 | x  | & \leq& | x - a^j(\mathbf{q}) | +
|  a^j(\mathbf{q}) | \\
 & \leq & 2^{-k} + 2^{-k}(2(2 + 2(j-1))^2 + 2^3 )^{1/2}
 \leq 2^{-k} + 2^{-k}(2(2 + 2(k-1))^2 + 2 \ 2^2 )^{1/2} \\
 &\leq&  2^{-k} + 2^{-k}(2(4(k-1))^2 + 2 (k-1)^2 )^{1/2}\\
 & \leq& 2^{-k} + 2^{-k}(2^6(k-1)^2  )^{1/2} = 2^{-k} + 2^{-k+3}(k-1)  \leq 2^{-k+4}(k-1) < 1.
 \end{eqnarray*}
Further,
$$
\|\phi_k\|^2_{L^2(\mathbb{R}^d)}= \sum_{j=0}^{k}  |a^j(2^{-k}B)|_d = 2^{-kd}(k+1)|B|_d.
$$
On the other hand,
\begin{eqnarray*}
\int_{ \rr}\phi_k( x)\phi_k( a (x))dx &=&  \sum_{j=1}^{k}
|a^j(2^{-k}B )|_d = 2^{-kd}k|B|_d
\end{eqnarray*}
Thus
$$\frac{\displaystyle \int _{ \rr}\phi_k( x)\phi_k( a (x)))dx}{\displaystyle |\det (A)|^{-1/2}
\int_{\rr}\phi^2(x)dx}= \frac{k}{k+1}
\rightarrow 1, \quad k \rightarrow \infty.$$

Now, we observe, as we did before, that
$$\sup_{\phi}  \int_{\rr^d} \phi( x)\phi( a( x))dx
\leq 1.$$
Hence, the conclusion follows.
\end{proof}

Now we are ready to proceed with the proof of our main result
concerning linear maps $a$.

\begin{proof}[Proof of Theorem~\ref{main.result}]
According to Theorem \ref{teo.cota.por.abajo},
$$\lim_{R\rightarrow \infty}\lambda_1(B_R)=\lambda_1(\rr^d)\geq 2(1-|\det (A)|^{-1/2})^2
\int _{\rr^d}\psi(z)dz.$$ So, if we prove
\begin{equation} \label{menoroigual}
\lim_{R\rightarrow \infty}\lambda_1(B_R)=\lambda_1(\rr^d)\leq 2(1-|\det (A)|^{-1/2})^2\int _{\rr^d}\psi(z)dz,
\end{equation}
the proof is finished. Let us see that (\ref{menoroigual}) holds.

 Using Jordan's decomposition, there exist $C$ and  $J$ two
$d \times d$ invertible matrices with real entries such that $A=
CJC^{-1}$. Note that $J$ is defined by  Jordan blocks, i.e,
\begin{equation} \label{Jordan}
 J= \left (
\begin{array}{ccccccccc}
  J_1 (\lambda_1)&        &    &  &        &    &           \\
          &  \ddots &  & &        &    &        \\
          &         &         & J_r (\lambda_r) &        &    &      \\
    &        &    &      &       J_{r+1} (\alpha_1, \beta_1)&        &    &       \\
    &        &    &      &       &  \ddots &  &   \\
     &        &    &      &      &         &         & J_{r+ s} (\alpha_s, \beta_s)
\end{array}  \right ),
\end{equation}
with
\begin{equation} \label{realJordan}
 J_k(\lambda)= \left (
\begin{array}{ccccccccc}
  \lambda & 1       &    &       \\
          &  \ddots &  & 1  \\
          &         &         & \lambda
\end{array}  \right ), \quad k=1,\dots, r,
\end{equation}
or
\begin{equation} \label{complexJordan}
 J_k(\alpha, \beta)= \left (
\begin{array}{ccccccccc}
  \mathbf{M} & \mathbf{I}       &    &       \\
          &  \ddots &  & \mathbf{I}  \\
          &         &         & \mathbf{M}
\end{array}  \right ), \quad k=r+1,\dots,r+s.
\end{equation}
Here $\lambda$, $\alpha$ and $\beta$ are real numbers, $\mathbf{M}
=
\left (
\begin{array}{ccccccccc}
  \alpha & \beta       \\
    -\beta      &  \alpha
\end{array}  \right )
$ and $\mathbf{I} = \left (
\begin{array}{ccccccccc}
  1 & 0       \\
    0      &  1
\end{array}  \right )$.

Given a $d_k \times d_k$ Jordan block $J_k$ as in
(\ref{realJordan}) or (\ref{complexJordan}), then either $J_k$  or
$J_k^{-1}$ is expansive, or the corresponding eigenvalue has
absolute value equal to $1$. Then by Lemma
\ref{minimizerexpansive} or Lemma \ref{minimizeruni.44} there exists
$ \{\phi_j^{(k)} \}_{j=1}^{\infty} \in L^2(\mathbb{R}^{d_k})$, $\|
\phi_j \|_{L^2(\mathbb{R}^{d_k})}=1$, a sequence of
functions supported in the unit ball of $\rr^{d_k}$ such that
\begin{equation} \label{limsup}
\lim_{j \to \infty}\int_{\mathbb{R}^{d_k} }
 \phi^{(k)}_j( x)  \phi^{(k)}_j ( J_k (x))dx=|\det \ J_k|^{-1/2}.
\end{equation}

For $j \in \mathbb{N}$, we choose
$$
\varphi_j(x_1^{(1)}, \dots, x_{d_1}^{(1)}, \dots \dots,
x_1^{(r+s)}, \dots, x_{d_1}^{(r+s)})= \prod_{k=1}^{r+s} \phi_j^{(k)}
(x_1^{(k)}, \dots, x_{d_k}^{(k)})
$$
 and
$$
\Phi_j(x)= (r+s)^{-d/4}\| C^{-1} \|^{-1/2} | \det C |^{-1/2}
\varphi_j ((r+s)^{-1/2}\| C^{-1} \|^{-1} C^{-1} x),
$$
where  $\| C^{-1} \|$ denotes the norm of $C^{-1}$ as operator on
$\rr^d$. Observe that $\Phi_j$ is supported in $B_{1}$ and $\|
\Phi_j \|_{L^2(\rr^d)}=1$ After the change of variable $\| C^{-1}
\|^{-1}C^{-1}x=y$,   we have
\begin{equation}
\label{maximizersaaaa}
\begin{array}{l}
\displaystyle \lim_{j \to \infty}\int_{\mathbb{R}^{d} }
 \Phi_j( x)  \Phi_j ( a (x))dx \\
\displaystyle  =  (r+s)^{-d/2}\| C^{-1} \|^{-1} | \det C |^{-1}
\lim_{j \to \infty}\int_{\mathbb{R}^{d} } \varphi_j((r+s)^{-1/2} \|
C^{-1}
\|^{-1} C^{-1}x)  \\
\qquad \qquad \qquad \qquad \qquad \qquad  \qquad \times
\varphi_j ((r+s)^{-1/2}\| C^{-1} \|^{-1} C^{-1}CJC^{-1} (x))dx \\
\displaystyle  =   \lim_{j \to \infty}\int_{\mathbb{R}^{d} }
 \varphi_j( y)  \varphi_j ( J y)dy  = \prod_{k=1}^{r+s}\lim_{j \to \infty}\int_{\mathbb{R}^{d_k} }
 \phi^{(k)}_j( x)  \phi^{(k)}_j ( J_k( x))dx \\
 \displaystyle = \prod_{k=1}^{r+s}|\det \ J_k(\lambda _k)|^{-1/2} = |\det (A)|^{-1/2}.
 \end{array}
 \end{equation}

 Again, using Holder's inequality, we obtain, for any function
 $\phi$ as in the statement,
\begin{equation} \label{quimicos}
\int_{\mathbb{R}^{d} }
 \phi ( x)  \phi ( a (x))dx  \leq |\det (A)|^{-1/2}.
\end{equation}
Therefore, we have
\begin{align*}
\int _{\rr^d}\Big(\phi( x)-\phi( a( x))\Big)^2&=1+|\det (A) |^{-1}-2\int_{ \rr^d}\phi( x)\phi( a( x)),
 \end{align*}
 then by (\ref{maximizersaaaa}) and \eqref{quimicos},
 $$
 \inf_{\| \phi \|_{L^2(B_1)}=1}
\int_{\rr^d}\Big(\phi( x)-\phi( a( x))\Big)^2 = (1-|\det (A)|^{-1/2})^2.
 $$
Hence, using the results contained in Lemma \ref{lem.up} the proof
is finished.
\end{proof}

 {\bf Acknowledgements.}

L. Ignat partially supported by grants PN-II-ID-PCE-2011-3-0075
and PN-II-TE 4/2010 of the Romanian National Authority for
Scientific Research, CNCS--UEFISCDI, MTM2011-29306-C02-00, MICINN,
Spain and ERC Advanced Grant FP7-246775 NUMERIWAVES.

J. D. Rossi and A. San Antolin partially supported by Supported by
DGICYT grant PB94-0153 MICINN, Spain. J. D. Rossi also
acknowledges support from UBA X066 (Argentina) and CONICET
(Argentina).

\bigskip



\begin{thebibliography}{XX}

\bibitem{AMRT} {\sc F. Andreu, J. M. Mazon, J.D. Rossi, J. Toledo},
{\em The Neumann problem for nonlocal nonlinear diffusion
equations}, J. Evol. Eqns. {\bf 8(1)} (2008), 189--215.

\bibitem{AMRT2} {\sc F. Andreu, J. M. Mazon, J.D. Rossi, J. Toledo},
{\em A nonlocal $p$-Laplacian evolution equation with Neumann
boundary conditions}, J. Math. Pures Appl. {\bf 90(2)}, (2008),
201--227.

\bibitem{libro} {\sc F. Andreu, J. M. Maz{\'o}n, J. D. Rossi, J. Toledo}.
{Nonlocal Diffusion Problems.} Amer. Math. Soc. Mathematical
Surveys and Monographs 2010. Vol. 165.

\bibitem{BN} {\sc G. Bachman, L. Narici}, {\em Functional analysis},
Dover, New York, 2000.

\bibitem{BCC} {\sc P. Bates, X. Chen, A. Chmaj}, {\em Heteroclinic
solutions of a van der Waals model with indefinite nonlocal
interactions}, Calc. Var. {\bf 24} (2005), 261--281.

\bibitem{BFRW} {\sc P. Bates, P. Fife, X. Ren, X. Wang}, {\em
Traveling waves in a convolution model for phase transitions},
Arch. Rat. Mech. Anal. {\bf 138} (1997), 105--136.

\bibitem{CCR} {\sc E. Chasseigne, M. Chaves, J.D. Rossi}, {\em
Asymptotic behavior for nonlocal diffusion equations}, J. Math.
Pures Appl. {\bf 86} (2006), 271--291.


\bibitem{CR1} {\sc A. Chmaj, X. Ren}, {\em Homoclinic solutions of
an integral equation: existence and stability}, J. Diff. Eqns.
{\bf 155} (1999), 17--43.

\bibitem{CCEM} {\sc C. Cort{\'a}zar, J. Coville, M. Elgueta, S.
Mart\'{\i}nez}, {\em A non local inhomogeneous dispersal process},
J. Diff. Eqns. {\bf 241} (2007), 332--358.

\bibitem{CER} {\sc C. Cort{\'a}zar, M. Elgueta, J.D. Rossi}, {\em A
nonlocal diffusion equation whose solutions develop a free
boundary}, Ann. Henri Poincar{\'e} {\bf 6} (2005), 269--281.


\bibitem{CERW1} {\sc C. Cort{\'a}zar, M. Elgueta, J.D. Rossi, N. Wolanski},
{\em Boundary fluxes for nonlocal diffusion}, J. Diff. Eqns. {\bf
234} (2007), 360--390.

\bibitem{CERW2} {\sc C. Cort{\'a}zar, M. Elgueta, J.D. Rossi, N. Wolanski},
{\em How to approximate the heat equation with Neumann boundary
conditions by nonlocal diffusion problems}, Arch. Rat. Mech. Anal.
{\bf 187} (2008) 137--156.

\bibitem{Co1} {\sc J. Coville}, {\em On uniqueness and monotonicity of
solutions on non-local reaction diffusion equations}, Ann. Mat.
Pura Appl. {\bf 185} (2006), 461--485.

\bibitem{CD2} {\sc J. Coville, L. Dupaigne}, {\em On a nonlocal
equation arising in population dynamics}, Proc. Roy. Soc.
Edinburgh {\bf 137} (2007), 1--29.

\bibitem{Du} {\sc Q. Du, M. Gunzburger, R.Lehoucq and K. Zhou.}
{\it A nonlocal vector calculus, nonlocal volume-constrained
problems, and nonlocal balance laws.} Preprint.



\bibitem{F} {\sc P. Fife}, {\em Some nonclassical trends in
parabolic and parabolic-like evolutions}, in ``Trends in nonlinear
analysis", pp. 153--191, Springer-Verlag, Berlin, 2003.

\bibitem{GR2} {\sc J. Garc\'{\i}a-Meli{\'a}n, J.D. Rossi}, {\em
Maximum and antimaximum principles for some nonlocal diffusion
operators}, Nonlinear Analysis TM\&A. {\bf 71}, (2009),
6116--6121.

\bibitem{GMR-autov} {\sc J. Garc\'{\i}a-Meli{\'a}n, J.D. Rossi}, {\em
On the principal eigenvalue of some nonlocal diffusion problems.}
J. Differential Equations. {\bf 246(1)}, (2009), 21--38.

\bibitem{GT} {\sc D. Gilbarg, N. Trudinger}, Elliptic partial differential equations of second order.
Classics in Mathematics. Springer-Verlag, Berlin, 2001.

\bibitem{HMMV} {\sc V. Hutson, S. Mart\'{\i}nez, K. Mischaikow,
G. T. Vickers}, {\em The evolution of dispersal}, J. Math. Biol.
{\bf 47} (2003), 483--517.

\bibitem{IR2} {\sc L. I. Ignat, J. D. Rossi}, {\em A nonlocal
convection-diffusion equation}, J. Funct. Anal. {\bf 251} (2007),
399--437.

\bibitem{IR3} {\sc L. I. Ignat, J. D. Rossi}, {\it Decay estimates for nonlocal problems via energy
methods.} J. Math. Pures Appl. {\bf 92(2)}, (2009), 163--187.

\bibitem{KR} {\sc M. G. Krein, M. A. Rutman}, {\em Linear operators
leaving invariant a cone in a Banach space}, Amer. Math. Soc.
Transl. {\bf 10} (1962), 199--325.

\bibitem{PLPS} {\sc M. L. Parks, R. B. Lehoucq, S. Plimpton, and S. Silling.}
{\it Implementing peridynamics within a molecular dynamics code},
Computer Physics Comm., {179}, (2008), 777--783.



\bibitem{Sill} {\sc S. A. Silling}. {\it
Reformulation of Elasticity Theory for Discontinuities and
Long-Range Forces}. J. Mech. Phys. Solids, {48}, (2000), 175–-209.

\bibitem{SL} {\sc S. A. Silling and R. B. Lehoucq.}
{\it Convergence of Peridynamics to Classical Elasticity Theory}.
J. Elasticity, {93} (2008), 13--37.



\end{thebibliography}
\end{document}